\title{The volumes of Miyauchi subgroups}
\author{Didier \textsc{Lesesvre} and Ian \textsc{Petrow}}
\date{\today}
\let\@oddfoot\@empty
\let\@evenfoot\@empty
\def\@maketitle{%
  \newpage
  \renewcommand{\headrulewidth}{0pt}
  \thispagestyle{empty}
  \null
  \vskip 2em%
  \begin{center}%
    {\Large\sffamily\bfseries \MakeUppercase\@title  \par}%
    \vskip .5em%
  \includegraphics{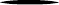}\\
  \vskip 1em
    {\large\mdseries\hspace{0cm}
      \begin{tabular}[t]{c}%
        \sffamily\hspace{0cm} Didier Lesesvre and Ian Petrow
      \end{tabular}\par}%
    \vskip 1em%
  \end{center}%
}
\renewcommand{\headrulewidth}{1pt}
\renewenvironment{abstract}{\bigskip \begin{center}\begin{minipage}[c]{0.8\textwidth} \normalsize}{\end{minipage} \end{center}}
\newcommand{\Z}{\mathbf{Z}}
\newcommand{\p}{\mathfrak{P}}
\newcommand{\fp}{\mathfrak{P}}
\newcommand{\GL}{\mathrm{GL}}
\newcommand{\SL}{\mathrm{SL}}
\newcommand{\fO}{\mathfrak{O}}
\renewcommand{\o}{\mathfrak{o}}
\newcommand{\vol}{\mathrm{vol}}
\newtheorem{lem}{Lemma}
\newtheorem{thm}{Theorem}
\newtheorem{cor}{Corollary}
\numberwithin{equation}{section}
\address{\newline School of mathematics (Zhuhai) \newline
Zhuhai Campus, Sun Yat-Sen University \newline
Tangjiawan, Zhuhai, Guangdong, 519082, China (PRC) \newline \textnormal{\texttt{lesesvre@math.cnrs.fr}} \newline \newline University College London \newline Department of Mathematics \newline 25 Gordon Street \newline London WC1H 0AY, United Kingdom
\newline \textnormal{\texttt{i.petrow@ucl.ac.uk}} }
\begin{document}

\maketitle
\large

\begin{abstract}
Miyauchi described the $L$ and $\varepsilon$-factors attached to generic representations of the unramified unitary group of rank three in terms of local newforms defined by a sequence of subgroups. We calculate the volumes of these Miyauchi groups.
\end{abstract}

\setcounter{tocdepth}{1}
\tableofcontents

\section{Introduction}

\subsection{Miyauchi's theory of local newforms}

Let $E/F$ be a quadratic unramified extension of non-archimedean local fields of characteristic zero. We use $\overline{x}$ to denote the nontrivial Galois automorphism applied to $x \in E$. Let $\o$, $\fO$ be the rings of integers of the two fields $F$, $E$, respectively. Let $\mathfrak{p}$, $\p$ be their maximal ideals, and $q_F$, $q_E$ be the cardinalities of their residue fields. Let 
\begin{equation} U= \left\{ g \in \GL_3(E) : {}^t\overline{g} J g = J \right\},\end{equation}
where \begin{equation}\label{Jdef}J = \left( \begin{array}{ccc} & & 1 \\ & 1 & \\ 1 & & \end{array}\right).\end{equation}

The group $U$ is a rank $3$ unitary group defined over $F$. There is a unique isomorphism class of genuine unitary groups over $F$ \cite[Section 1.9]{Rogawski}, so we are content to work with the above explicit model for this single isomorphism class of unitary groups.

Consider the family of compact open subgroups of $U$ defined by for all $n\geqslant 0$ by \begin{equation}\Gamma_n =\left\{   g \in  \left(\begin{array}{ccc} \fO & \fO & \p^{-n} \\ \p^n & 1+ \p^n & \fO \\ \p^n & \p^n & \fO \end{array} \right) : \det (g ) \in \fO^\times\right\} \cap U.\end{equation}
We call these the Miyauchi subgroups, as defined in \cite{Miyauchi3}. These are of particular interest because of the theory of local newforms established by Miyauchi \cite{MiyauchiConductors2}, relating the notions of conductors and newforms to $L$ and $\varepsilon$-factors attached to representations of $U$, as summarized in the theorem below.

Chose a nontrivial unramified additive character $\psi_E$ of $E$ and a non-trivial additive character $\psi_F$ of $F$.  For an irreducible admissible generic representation $\pi$ of $U$, let $\mathcal{W}(\pi, \psi_E)$ be its Whittaker model with respect to $\psi_{E}$.  Gelbart, Piatetski-Shapiro \cite{gelbart_automorphic_1983} and Baruch \cite{Baruch} defined the $L$-factor $L(s, \pi)$ associated to an irreducible generic representation $\pi$ of $U$ as the greatest common divisor of a family of explicit zeta integrals $Z(s, W, \Phi)$ where $W \in \mathcal{W}(\pi, \psi_E)$ and $\Phi$ is a function in $C_c^\infty(F^2)$. The $\varepsilon$-factor $\varepsilon(s, \pi, \psi_E, \psi_F)$ is then defined as the quantity appearing in the functional equation satisfied by $L(s, \pi)$. These fundamental quantities in the theory of automorphic forms on $U$ can be characterized in a more explicit and computational way with the sequence of Miyauchi subgroups.

\lfoot{}

\begin{thm}[Miyauchi]
\label{thm:miyauchi-newforms}
Assume the residue characteristic of $F$ is odd. Let $\pi$ be an irreducible generic representation of $U$. For any $n \geqslant 0$, let $V(n)$ be the subspace of $\pi$ fixed by $\Gamma_n$. Then
\begin{itemize}
\item[(i)] there exists $n \geqslant 0$ such that $V(n)$ is nonzero;
\item[(ii)] if $n_\pi$ is the least such integer, then $V(n_\pi)$ is one-dimensional and its elements are called the newforms for $\pi$;
\item[(iii)] if $v$ is the newform for $\pi$ such that the corresponding Whittaker function $W_v$ satisfies $W_v(I_3) = 1$, then the corresponding Gelbart-Piatetski-Shapiro zeta integral $Z(s, W_v, \mathbf{1}_{\mathfrak{p}^{n_\pi} \oplus \mathfrak{o}})$ matches the $L$-factor $L(s, \pi)$;
\item[(iv)] if $\psi_F$ has conductor $\o$, then the $\varepsilon$-factor of $\pi$ satisfies \begin{equation}\varepsilon(s, \pi, \psi_E,\psi_F) = q_E^{- n_\pi  (s-1/2)}.\end{equation}
\end{itemize}
\end{thm}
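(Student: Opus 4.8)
The theorem is due to Miyauchi \cite{MiyauchiConductors2, Miyauchi3}; the approach I would follow works entirely inside the Whittaker model $\mathcal{W}(\pi,\psi_E)$, on which all four assertions can be read off, and uses as its main tool the local zeta integrals $Z(s,W)$ of Gelbart--Piatetski-Shapiro and Baruch \cite{Baruch} together with their local functional equation. When a uniform argument is unavailable one falls back on the classification of irreducible generic representations of $U$ --- principal series and their generic subquotients, and the supercuspidals --- coming from Rogawski's book \cite{Rogawski} and the reducibility and genericity results of Keys and Rodier, checking the claim representation by representation.

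Assertion (i) is the easy one. As $\pi$ is generic there is $W_0 \in \mathcal{W}(\pi,\psi_E)$ with $W_0(I_3) \neq 0$, and as $\pi$ is smooth $W_0$ is fixed by a principal congruence subgroup $K(\p^N) \cap U$ of $U$ for some $N$. Comparing matrix entries one checks $K(\p^N)\cap U \subseteq \Gamma_N$: the congruences defining $K(\p^N)$ are at least as strong as those defining $\Gamma_N$ in every entry, the upper-right one included since $\p^N \subseteq \p^{-N}$. Hence the vector underlying $W_0$ lies in $V(N)$, which is therefore nonzero.

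The substance of the theorem is the one-dimensionality in (ii). By the Iwasawa decomposition $U = B\,(U \cap \GL_3(\fO))$ and the $\psi_E$-equivariance of Whittaker functions under the unipotent radical of $B$, a $\Gamma_n$-fixed $W \in \mathcal{W}(\pi,\psi_E)$ is determined by its restriction to the diagonal torus $A$, and the interaction of left $\Gamma_n$-invariance with right translation by $A$ confines the support of $W|_A$ modulo the centre $Z$ to a finite, explicitly bounded set, which shrinks as $n$ decreases; in particular each $V(n)$ is finite dimensional. To cut the dimension down to exactly one at the minimal level $n_\pi$ one introduces on $\bigoplus_n V(n)$ a level-raising Hecke operator and the Atkin--Lehner involution $\eta_n = \mathrm{diag}(\varpi^{-n},1,\varpi^n)\,J \in U$ ($\varpi$ a uniformiser of $F$), which normalises $\Gamma_n$ and satisfies $\eta_n^2 = I_3$, and studies how $Z(s,W)$ --- essentially a Mellin transform of $W|_A$ in the single lattice variable of $A/Z$ --- behaves under them. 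The local functional equation relating $Z(s,W)$ to $Z(1-s,\widetilde W)$ then pins down the leading coefficient of $W|_A$ and forces $\dim V(n_\pi) \leqslant 1$; combined with (i) this yields (ii). This uniqueness step is the main obstacle; the supercuspidal case is the delicate one, where one argues either via explicit types for $U$ or by transporting the newvector theory across the theta correspondence with smaller unitary groups.

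For (iii) and (iv), normalise the newvector $v$ so that $W_v(I_3)=1$. The support description from (ii) plus the explicit values of Whittaker functions on $A$ --- the Casselman--Shalika/Shintani formula in the unramified directions, and a direct computation in the ramified constituents --- make $Z(s,W_v)$ computable, and one verifies it equals $L(s,\pi)$ as determined by the Langlands parameter of $\pi$; this is (iii). Inserting $W_v$ into the functional equation, and using that $\widetilde W_v$ is, up to the Atkin--Lehner normalisation, the newvector of the contragredient $\widetilde\pi$, both sides become the respective $L$-factors, so the leftover proportionality constant is the $\varepsilon$-factor; tracking the $\p$-adic content of the element implementing the functional equation on $W_v$, and using that $\psi_F$ has conductor $\o$, identifies it as $q_E^{-n_\pi(s-1/2)}$, which is (iv).
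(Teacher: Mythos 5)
The paper does not actually reprove this theorem: its ``proof'' is a one-line citation, attributing (i)--(ii) to \cite[Theorem 0.3]{Miyauchi3} and (iii)--(iv) to \cite[Theorem 3.6]{Miyauchi4}. Your sketch is therefore a reconstruction of Miyauchi's arguments rather than an alternative to anything in this paper. The broad shape is right --- work in the Whittaker model, use the Gelbart--Piatetski-Shapiro/Baruch zeta integrals and their local functional equation, exploit the involution you call $\eta_n$ (this is exactly the element $\sigma_n$ introduced later in the paper, and indeed $\sigma_n^2 = I$) --- and your part (i) is correct: $K(\p^N)\cap U \subseteq \Gamma_N$ does hold entrywise, so smoothness plus genericity give a nonzero $V(N)$.

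The trouble is that the sketch has gaps at precisely the points where the theorem is hard. First, the claim that a $\Gamma_n$-fixed $W$ is determined by its restriction to the diagonal torus $A$ is not correct for $n\geqslant 1$: Iwasawa gives $U = B\Gamma_0$, so left $\psi_E$-equivariance plus right $\Gamma_n$-invariance determine $W$ from its values on $A\cdot(\Gamma_0/\Gamma_n)$, a finite union of translates of $A$, and the zeta integral only sees one of these. Reducing to ``a single lattice variable of $A/Z$'' is the crux and is asserted, not proved. Second, the bound $\dim V(n_\pi)\leqslant 1$ --- the substance of (ii) --- is said to ``follow'' from the functional equation together with an unspecified level-raising Hecke operator and Atkin--Lehner argument; nothing is written down, and for the supercuspidal representations you merely list two possible strategies (types, theta lifts) without carrying either out. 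Third, assertion (iv) says more than that $\varepsilon(s,\pi,\psi_E,\psi_F)$ is proportional to $q_E^{-n_\pi(s-1/2)}$: it says the root number is exactly $1$. Your sketch stops at ``the leftover proportionality constant is the $\varepsilon$-factor'' and does not compute it, which is the nontrivial normalisation that uses $\psi_F$ of conductor $\o$. These are exactly the points that occupy Miyauchi's two papers; a reference to them, as the paper gives, is the honest proof here, and as a standalone argument the sketch does not close them.
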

\begin{proof} 
Points (i) and (ii) are \cite[Theorem 0.3]{Miyauchi3}, and points (iii) and (iv) are \cite[Theorem 3.6]{Miyauchi4}. 
\end{proof}

Consider the Haar measure on $U$ normalized so that the maximal open compact subgroup $\Gamma_0$ gets volume one. In this short note, we compute the volumes of the Miyauchi subgroups with respect to this measure.  

\begin{thm}
\label{theorem} 
For every $n \geqslant 1$, the volumes of the Miyauchi subgroups are given by
\begin{equation}
{\mathrm{vol}}(\Gamma_n) = q_F^{-3n}(1+ q_F^{-3})^{-1}  .
\end{equation}
\end{thm}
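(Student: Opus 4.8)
The plan is to compare $\Gamma_n$ with $\Gamma_0$ through their intersection. Since $\Gamma_n \not\subseteq \Gamma_0$ — the $(1,3)$-entries of elements of $\Gamma_n$ run over $\p^{-n}$ — set $K_n := \Gamma_0 \cap \Gamma_n$, a compact open subgroup of finite index in each. From $\vol(\Gamma_0)=1$ one gets $\vol(K_n) = [\Gamma_0:K_n]^{-1}$, and hence
\[
\vol(\Gamma_n) = \frac{[\Gamma_n:K_n]}{[\Gamma_0:K_n]}.
\]
Every entry of an element of $\Gamma_n$ lies in $\fO$ except possibly the $(1,3)$-entry, so $K_n = \{g\in\Gamma_n : g_{13}\in\fO\}$, and both indices can be computed explicitly.

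First I would compute the denominator. As $U$ is unramified, $\Gamma_0 = \mathcal{U}(\o)$ for a smooth reductive model $\mathcal{U}/\o$ whose special fibre is the finite unitary group $U_3(\F_{q_F})$, of order $q_F^3(q_F+1)(q_F^2-1)(q_F^3+1)$. Reduction modulo $\p^n$ is surjective, and $K_n$ is precisely the preimage of the subgroup $H_n\subseteq\mathcal{U}(\o/p^n)$ of upper-triangular matrices with middle diagonal entry $1$; one recognises $H_n$ as the $\o/p^n$-points of the subgroup $B'\subseteq B$ of the minimal parabolic $B=MN$ of $U$ cut out by trivialising the anisotropic torus factor $E^1\subseteq M$, so that $\dim B' = \dim B - 1 = 5$. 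A point count — using smoothness for the higher congruence levels and $|U_3(\F_{q_F})|$, $|B(\F_{q_F})| = (q_F^2-1)(q_F+1)q_F^3$ at level one — then gives
\[
[\Gamma_0:K_n] = [\mathcal{U}(\o/p^n):H_n] = q_F^{4(n-1)}(q_F+1)(q_F^3+1).
\]

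For the numerator, the key observation is that $\Gamma_n$, although it does not preserve $\fO^3$, does preserve the lattice $L := \p^{-n}e_1\oplus\fO e_2\oplus\fO e_3$ (a one-line matrix check from the shape of $\Gamma_n$), and therefore also its dual $L^{\vee}=\fO e_1\oplus\fO e_2\oplus\p^n e_3$ for the Hermitian form $h$ attached to $J$. Thus $\Gamma_n$ acts on $M := L/L^{\vee}$, a free $\fO/\p^n$-module of rank $2$ which, after rescaling $h$ by $\varpi^n$ for a uniformiser $\varpi$ of $F$, carries a perfect Hermitian pairing making it a hyperbolic plane with $\overline{e_1},\overline{e_3}$ a pair of dual isotropic lines. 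I would then check that the stabiliser in $\Gamma_n$ of the isotropic line $\overline{\fO e_3}\subset M$ is exactly $K_n$: since any $g\in\Gamma_n$ already preserves $L^{\vee}\subseteq\fO^3$, fixing $\overline{\fO e_3}$ forces $g\fO^3 + L^{\vee}=\fO^3$, hence $g\fO^3=\fO^3$, i.e.\ $g\in\Gamma_0$. The isotropic lines of a hyperbolic Hermitian plane over $\fO/\p^n$ number $q_F^n + q_F^{n-1} = q_F^{n-1}(q_F+1) = |\mathbf{P}^1(\o/p^n)|$ — namely $\overline{\fO e_3 + \lambda e_1}$ with $\lambda+\overline{\lambda}=0$, together with those near $\overline{\fO e_1}$. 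Granting that $\Gamma_n$ acts transitively on this set, $[\Gamma_n:K_n] = q_F^{n-1}(q_F+1)$, and then
\[
\vol(\Gamma_n) = \frac{q_F^{n-1}(q_F+1)}{q_F^{4(n-1)}(q_F+1)(q_F^3+1)} = \frac{q_F^{3-3n}}{q_F^3+1} = q_F^{-3n}(1+q_F^{-3})^{-1},
\]
as claimed.

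The main obstacle is this transitivity. I would prove it by exhibiting enough elements of $\Gamma_n$: the unipotent radical $\Gamma_n\cap N$, whose $(1,3)$-entry $y$ ranges over $\p^{-n}$ subject to the Hermitian relation $y+\overline{y}=x\overline{x}$, carries $\overline{\fO e_3}$ through all the ``affine'' isotropic lines $\overline{\fO e_3 + \lambda e_1}$, while the twisted Weyl element $\mathrm{diag}(\varpi^{-n},1,\varpi^n)J$, which one checks lies in $\Gamma_n$, interchanges $\overline{\fO e_1}$ and $\overline{\fO e_3}$ and produces the remaining lines; one then verifies these together act transitively. Sorting out the rescaling of $h$ that makes the form on $M$ integral and perfect, and checking the (routine) smoothness and point-count inputs behind $[\Gamma_0:K_n]$, are the remaining technical points.
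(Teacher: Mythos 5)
Your proposal is correct and reaches the right answer, but it differs from the paper in interesting ways once the opening move is made. First observe that your $K_n = \Gamma_0 \cap \Gamma_n$ is \emph{exactly} the paper's group $A_n$ (the ``Klingen-type'' subgroup introduced in step 1 of the outline): intersecting the $\Gamma_n$-shape with $\GL_3(\fO)$ forces $g_{13}\in\fO$ and nothing else, and this is precisely $A_n$. So your identity
$\vol(\Gamma_n)=[\Gamma_n:A_n]/[\Gamma_0:A_n]$
is also the backbone of the paper's argument. The divergence is in how the two indices are evaluated.

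For $[\Gamma_0:A_n]$, you identify $A_n$ as the preimage of $B'(\o/p^n)$ under $\Gamma_0\to\mathcal{U}(\o/p^n)$, where $B'\subset B$ is the Borel with the anisotropic torus factor $E^1$ trivialised, and then compare orders via the smoothness of $\mathcal{U}$ and $B'$ over $\o$: relative dimensions $9$ and $5$ give $q_F^{4(n-1)}$ at higher levels, and $|U_3(\F_{q_F})|/|B'(\F_{q_F})|=(q_F+1)(q_F^3+1)$ at level one. (Your parenthetical cites $|B(\F_{q_F})|$ rather than $|B'(\F_{q_F})|=(q_F^2-1)q_F^3$, but the displayed index is nevertheless correct.) The paper instead factors through the chain $A_n\subset B_n\subset\Gamma_0$: the index $[B_n:A_n]=|E^1_{\p^n}|$ comes from a short exact sequence plus Hilbert 90 cohomology lemmas, $[B_1:B_n]$ from an Iwahori factorisation, and $[\Gamma_0:B_1]$ from the Bruhat decomposition over $\F_{q_F}$. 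Your version is shorter and more conceptual, at the cost of invoking smooth integral models and a point count on group schemes; the paper's chain is longer but more elementary in its ingredients. Notably, both proofs ultimately touch all the same finite numbers: the paper's $|E^1_{\p^n}|$ is your anisotropic torus factor, its $|N(\fO/\p)|^{n-1}$ is your unipotent contribution to the $q_F^{4(n-1)}$, and $[\Gamma_0:B_1]=q_F^3+1$ is the Bruhat count in both.

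For $[\Gamma_n:A_n]=q_F^{n-1}(q_F+1)$, you give a genuinely different argument: $\Gamma_n$ preserves the lattice $L=\p^{-n}e_1\oplus\fO e_2\oplus\fO e_3$ and its dual, hence acts on the discriminant Hermitian module $M=L/L^\vee$, a hyperbolic Hermitian plane over $\fO/\p^n$; the stabiliser of the isotropic line $\overline{\fO e_3}$ is $A_n$, and transitivity (via the unipotent radical and the Weyl element $\sigma_n$) finishes it by orbit--stabiliser. The paper proves this same index by an explicit and inductive coset decomposition of $\Gamma_n$ by intermediate groups $C_n^{(k)}$, using precisely the same coset representatives $t_{n-k}(u)$ and $\sigma_n$ that you use to prove transitivity. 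Your Hermitian-lattice framing is a nice conceptual repackaging of what the paper does by hand. The technical items you flag as remaining (smoothness and relative dimensions of $\mathcal{U},B'$; transitivity on isotropic lines of the hyperbolic plane over $\fO/\p^n$) are genuine things to check, but your sketches for both are sound and routine to complete.
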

Since the subgroups $\Gamma_n$ characterize the conductor by the point (iv) in Miyauchi's theorem, their volumes should appear as invariants in many problems involving automorphic forms on unitary groups of rank 3.    
Theorem \ref{theorem} is the analogue of e.g.\ \cite[Theorem 4.2.5]{miyake_modular_1989} in the case of $\mathrm{GL}_{2}$, and of \cite[Lemma 3.3.3]{RobertsSchmidt} in the case of $\mathrm{GSp}_{4}$. 

\subsection{Classical theory of local newforms}

Let us turn back to a more usual setting, the one of general linear groups over $F$, where the established importance of local newforms is a suitable motivation to our problem.  For instance, the indices of the classical congruence subgroups in $\SL_2(\Z)$ are known to be equal to
\begin{equation}\label{indices}
[\SL_2(\Z):\Gamma_0(N)] = N \prod_{p \mid N} \left(1+\frac{1}{p}\right),
\end{equation}

\noindent which is a central fact in the theory of automorphic forms, appealed to for many purposes: establishing the dimension of the space of automorphic cusp forms of given level \cite{miyake_modular_1989}, sieving out the oldforms and the spectral multiplicities in arithmetic statistic questions \cite{iwaniec_low_2000}, or motivating normalization in spectral theory and trace formulas \cite{knightly_relative_2006}, for instance. 

In a more representation theoretic framework, the analogous result to Theorem \ref{thm:miyauchi-newforms} was established by Casselman \cite{casselman_results_1973} and Deligne \cite{DeligneNewforms} in the case of $\GL_{2}$ and by Jacquet, Piatetski-Shapiro and Shalika \cite{jacquet_conducteur_1981} in the case of $\GL_{m}$. More precisely, let
\begin{equation}
{\widetilde{\Gamma}_n} = \left(
\begin{array}{cc}
\GL_{m-1}(\mathfrak{o}) & M_{m-1, 1}(\mathfrak{o}) \\
{p}^n M_{1, m-1}(\mathfrak{o}) & 1 +{p}^n
\end{array}
 \right),
\end{equation}

\noindent be the classical congruence subgroups. These are of particular interest because of the following result, relating the theory of classical $L$-functions to the fixed vectors by these congruence subgroups, analogously to Theorem \ref{thm:miyauchi-newforms}. Let $\psi$ be a non-trivial unramified additive character of $F$ viewed as a character of the upper-triangular Borel subgroup of $\GL_m(F)$ as in \cite[Eq.\ (2)]{jacquet_conducteur_1981}. For an irreducible admissible generic representation $\pi$ of $\GL_m(F)$, let $\mathcal{W}(\pi, \psi)$ be its Whittaker model  with respect to $\psi$.

\begin{thm}[Casselman, Jacquet, Piatetski-Shapiro, Shalika]
\label{casselman}
Let $\pi$ be an irreducible generic representation of $\GL_{m}$. {For $n \geqslant 0$, let $V(n)$ be the subspace of $\pi$ fixed by $\widetilde{\Gamma}_n$.} Then
\begin{itemize}
\item[(i)] there exists $n \geqslant 0$ such that {$V(n)$ is nonzero};
\item[(ii)] if {$n_\pi$} is the least such integer, then {$V(n_\pi)$} is one-dimensional and its elements are called the newforms for $\pi$;
\item[(iii)] the $\varepsilon$-factor $\varepsilon(s, \pi, \psi)$ is $q_F^{-{n_\pi}(s-1/2)}$ times a quantity independent of $s$. 
\end{itemize}
\end{thm}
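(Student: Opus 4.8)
The plan is to prove Theorem~\ref{casselman} by the Rankin--Selberg method of Jacquet, Piatetski-Shapiro and Shalika \cite{jacquet_conducteur_1981}; for $n=2$ the more elementary arguments of Casselman \cite{casselman_results_1973} (via the Kirillov model) and of Deligne \cite{DeligneNewforms} (via local constants) also apply. First I would fix the Whittaker model $\mathcal{W}(\pi,\psi)$, which exists and is unique because $\pi$ is generic, and pass to the Kirillov model, i.e.\ restrict Whittaker functions to the mirabolic subgroup $P_n\subset\GL_n$: this restriction is faithful, and $C_c^\infty(N_{n-1}(F)\backslash\GL_{n-1}(F),\psi)$ embeds $P_n$-equivariantly as the bottom layer of the Bernstein--Zelevinsky filtration of $\pi|_{P_n}$. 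For $W\in\mathcal{W}(\pi,\psi)$ and a character $\chi$ of $F^\times$ one then forms the local $\GL_n\times\GL_1$ zeta integrals
\[
\Psi(s,W,\chi)=\int_{N_{n-1}(F)\backslash\GL_{n-1}(F)} W\!\left(\begin{smallmatrix} g & \\ & 1\end{smallmatrix}\right)\chi(\det g)\,|\det g|^{\,s-1/2}\,\dd g,
\]
which converge for $\Re(s)$ large, continue to rational functions of $q_F^{-s}$, span a fractional ideal of $\C[q_F^{\pm s}]$ with normalised generator $L(s,\pi\times\chi)$, and obey a functional equation relating $\Psi(s,W,\chi)$ to the analogous integral built from the Weyl-conjugate of the contragredient function $\widetilde W$ at $1-s$, with proportionality constant $\gamma(s,\pi\times\chi,\psi)=\eps(s,\pi\times\chi,\psi)\,L(1-s,\widetilde\pi\times\chi^{-1})/L(s,\pi\times\chi)$. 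Elementary (Tate-type) structure theory shows $\eps(s,\pi\times\chi,\psi)$ is a unit times a power of $q_F^{-s}$; writing $\eps(s,\pi,\psi)=\eps(\tfrac12,\pi,\psi)\,q_F^{-a(\pi)(s-1/2)}$ (the case $\chi=1$) defines a nonnegative integer $a(\pi)$ that the proof must identify with $n_\pi$.

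The core construction is the \emph{essential vector}: I would produce a nonzero $W^\circ\in\mathcal{W}(\pi,\psi)$, normalised so that $W^\circ(I_n)=1$, satisfying $\Psi(s,W^\circ,\chi)=L(s,\pi\times\chi)$ for \emph{every} unramified $\chi$. Its existence is extracted from the simultaneous surjectivity of the maps $W\mapsto\Psi(s,W,\chi)$ onto $L(s,\pi\times\chi)\,\C[q_F^{\pm s}]$ over unramified $\chi$, together with a finiteness argument and the functional equation. The functional equation then identifies the Weyl-conjugate of $\widetilde{W^\circ}$ with the essential vector of $\widetilde\pi$ up to the factor $\eps(s,\pi\times\chi,\psi)$, and comparing the supports of the two sides in the Iwasawa decomposition of $\GL_{n-1}$ --- controlled from one side by $W^\circ$ and from the other by $\widetilde{W^\circ}$ --- forces $W^\circ$ to be fixed by $\widetilde{\Gamma}_{a(\pi)}$ and by no $\widetilde{\Gamma}_m$ with $m<a(\pi)$; this gives (i) and $n_\pi\le a(\pi)$. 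Conversely, for $0\neq v\in V(m)$ the $\widetilde{\Gamma}_m$-invariance of $W_v$ bounds the support of $g\mapsto W_v\!\left(\begin{smallmatrix} g & \\ & 1\end{smallmatrix}\right)$ on one side, so $\Psi(s,W_v,\chi)/L(s,\pi\times\chi)$ is a polynomial in $q_F^{-s}$ of degree bounded in terms of $m$; inserting this and its functional-equation mirror into $\gamma(s,\pi\times\chi,\psi)$ yields $a(\pi)\le m$, whence $n_\pi=a(\pi)$. Applying the functional equation to $W^\circ$ itself then gives $\eps(s,\pi,\psi)=\gamma(s,\pi,\psi)\,L(s,\pi)/L(1-s,\widetilde\pi)=\eps(\tfrac12,\pi,\psi)\,q_F^{-n_\pi(s-1/2)}$, which is (iii).

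For the one-dimensionality in (ii), suppose $\dim V(n_\pi)\ge2$. The functional $W\mapsto W(I_n)$ is nonzero on $V(n_\pi)$, since it is nonzero on $W^\circ$; hence there is $0\neq v\in V(n_\pi)$ with $W_v(I_n)=0$. The support estimate above, applied now on both sides of the functional equation, shows that $g\mapsto W_v\!\left(\begin{smallmatrix} g & \\ & 1\end{smallmatrix}\right)$ is \emph{compactly} supported modulo $N_{n-1}(F)$, is $\widetilde{\Gamma}_{n_\pi}$-invariant, and vanishes on $\GL_{n-1}(\o)$ (because $W_v(I_n)=0$ together with invariance); the extremal-degree constraint saturated by the equality $n_\pi=a(\pi)$ leaves no room for such a function other than $0$. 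Therefore $W_v|_{P_n}=0$, and faithfulness of the Kirillov model forces $v=0$, a contradiction. Hence $\dim V(n_\pi)=1$.

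The main obstacle is the essential vector: establishing the simultaneous surjectivity of the zeta integrals, and then converting the two-sided support control furnished by the functional equation both into the exact invariance $W^\circ\in\pi^{\widetilde{\Gamma}_{a(\pi)}}$ and into the vanishing argument of the previous paragraph. The nonnegativity of the exponent $a(\pi)$ and the behaviour of the non-generic constituents appearing in the Bernstein--Zelevinsky filtration are the points that require care. For $n=2$ everything collapses to an explicit computation: $\widetilde{\Gamma}_m$-invariance of a Kirillov function $a\mapsto W\!\left(\begin{smallmatrix} a & \\ & 1\end{smallmatrix}\right)$ on $F^\times$ amounts to finitely many linear relations among its values, from which existence, uniqueness and the value of the conductor follow directly, as in Casselman's original treatment.
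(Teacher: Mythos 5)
You are proposing a proof of a statement that the paper itself does not prove: Theorem \ref{casselman} is quoted as background, with the argument deferred to Casselman, Deligne, and Jacquet--Piatetski-Shapiro--Shalika, so there is no internal proof to compare against, and the only question is whether your outline would stand on its own. It would not, as written. First, a concrete technical problem: for $n\geqslant 3$ your central zeta integral is not even well-defined. The integrand $W(\mathrm{diag}(g,1))\,\chi(\det g)\,|\det g|^{s-1/2}$ transforms under left translation $g\mapsto ug$, $u\in N_{n-1}(F)$, by the nontrivial character $\psi_{n-1}(u)$ (coming from the superdiagonal of $u$ inside $N_n$), so it does not descend to $N_{n-1}(F)\backslash\GL_{n-1}(F)$. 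The JPSS construction instead pairs $W$ with a Whittaker function $W'_\sigma$ of a generic \emph{unramified representation} $\sigma$ of $\GL_{n-1}$, and the essential vector is characterized by $\Psi(s,W^\circ,W'_\sigma)=L(s,\pi\times\sigma)$ for all such $\sigma$; twists by characters of $\GL_1$ alone are insufficient to pin down the newvector and the conductor once $n\geqslant 3$ (only the $n=2$ case collapses to $\GL_1$ twists, as in Casselman).

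Second, the steps you yourself flag as ``the main obstacle'' are the content of the theorem, and your sketch invokes them rather than proving them. The existence of the essential vector and the fact that it is fixed by exactly $\widetilde{\Gamma}_{a(\pi)}$ do not follow from ``simultaneous surjectivity together with a finiteness argument and the functional equation''; this is the hard core of the JPSS paper, and even their published treatment of this point contained a gap repaired only much later (Jacquet's correction, and Matringe's alternative proof). Likewise the converse inequality $a(\pi)\leqslant m$ for $0\neq v\in V(m)$ requires precise degree bookkeeping in the functional equation against all unramified $\sigma$ of $\GL_{n-1}$, not a one-sided support remark; and the multiplicity-one step, as you state it (``the extremal-degree constraint \dots leaves no room for such a function other than $0$''), is an assertion, not an argument. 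The actual proof shows that for $v\in V(n_\pi)$ the ratio $\Psi(s,W_v,W'_\sigma)/L(s,\pi\times\sigma)$ is forced to be constant by degree bounds from both sides of the functional equation, and then uses an injectivity statement for the restriction of $\widetilde{\Gamma}_{n_\pi}$-invariant Whittaker functions to the embedded $\GL_{n-1}$ (via completeness of spherical Whittaker functions and the Kirillov/mirabolic restriction); neither ingredient is supplied in your sketch. So the strategy is the standard and correct one, but the essential-vector construction, the identification $n_\pi=a(\pi)$, and the one-dimensionality all remain to be carried out.
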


Since these congruence subgroups characterize the conductor by the point (iii) of the above theorem, their volumes govern the sizes of the families of automorphic forms on $\GL_{m}$. The indices \eqref{indices} are therefore necessary to compute the main term in the automorphic Weyl Law in the analytic conductor aspect for $\GL_{2}$ and its inner forms \cite{BrumleyMilicevic, Lesesvre}, where the characterization of the conductor in terms of congruence subgroups is used in a critical way. The strong similarity between Theorem \ref{thm:miyauchi-newforms} and Theorem \ref{casselman},  as well as the fundamental role played by the volumes of the subgroups $\tilde{\Gamma}_n$ in the $\GL_m$ case, suggest that the volumes of the Miyauchi subgroups $\Gamma_n$ will play a central role in the analytic theory of $U$. In particular, we expect that these volumes will appear in the main term of the automorphic Weyl Law for rank 3 unitary groups, see \cite[Chapter 5]{lesesvre_arithmetic_2018}. 

\subsection{Outline of the proof}

We follow the strategy of Roberts and Schmidt \cite[Lemma 3.3.3]{RobertsSchmidt}, but encounter some new difficulties owing to the fact that $U$ is non-split. The proof has several steps.
\begin{enumerate}
\item Consider the subgroups of $\Gamma_n$ given by, for any $n \geqslant 0$, $$A_n = \left\{ g \in \left( \begin{array}{ccc} \fO & \fO & \fO \\ \p^n & 1+ \p^n & \fO \\ \p^n & \p^n & \fO \end{array} \right) : \det (g) \in \fO^\times\right\} \cap U ,$$ which are the analogs of the Klingen subgroups introduced by Roberts and Schmidt \cite[Equation (2.5)]{RobertsSchmidt} in the $\mathrm{GSp(4)}$ setting. 
Lemma \ref{lem-AA:index} gives the decomposition of $\Gamma_n$ into $A_n$-cosets in terms of the group of trace zero elements of $\fO/\p^n$.  The sizes of these groups can then be computed using group cohomology and the integral normal basis theorem for unramified extensions of local fields (see Lemma \ref{lem:cardinality-E0}). In particular, Lemma \ref{lem-AA:index} reduces the computation of the {volume} of $\Gamma_n$ to the calculation of the indices $[\Gamma_0:A_n]$.
\item Consider the subgroups $B_n$ of $\Gamma_n$ given by $$B_n = \left\{ g \in \left( \begin{array}{ccc} \fO & \fO & \fO \\ \p^n & \fO & \fO \\ \p^n & \p^n & \fO \end{array} \right) : \det (g) \in \fO^\times\right\} \cap U,$$ and note that $B_n$ contains $A_n$. Lemma \ref{lem:AB-decomposition} provides the short exact sequence $$ \xymatrix{ 1 \ar[r] & A_n \ar[r] & B_n  \ar[r] & E_{\p^n}^1 \ar[r] & 1 },$$ 
 where $E_{\p^n}^1$ is the set of elements in $\fO$ with $x \overline{x}=1$, reduced mod $\p^n$.  The cardinality of $E_{\p^n}^1$ may be calculated using Hilbert's Theorem 90 (see Lemma \ref{lem:cardinality-E1}). 
\item Calculate the index of $B_n$ in $B_1$. This is the content of Lemma \ref{lem:volB} and uses the Iwahori factorization of reductive groups in order to state a recursive relation between the index of $B_n$ in $B_{n-1}$.
\item Finally, the index of $B_1$ in $\Gamma_0$ is computed in Lemma \ref{lem:indexB1} by appealing to the Bruhat decomposition of the reduction modulo $\p$ of $B_1$.
\end{enumerate}

Altogether these results compile into Theorem \ref{theorem}. Throughout the proofs we may assume $n \geqslant 1$, since we normalize $\Gamma_0$ to have measure one. 
\subsection{Comments on ramification}
Even though Miyauchi's theory of newforms has only been established for unitary groups attached to unramified extensions $E/F$, the subgroups $\Gamma_n$ can be defined without this assumption and it would be natural to want to extend the results of this paper to allow for ramification. We discuss here some modifications it would induce. 

It seems reasonable to expect that the proof of Theorem \ref{theorem} given in this paper could be adapted to allow for tame ramification in $E/F$. The generalization would force us to adapt most of the group cohomology results in Section 2 and introduce casework according to the parity of the residue characteristic. For example, since $G$ acts trivially on $E_\p$ when $E/F$ is ramified, we would have instead $H^1(G, E_{\p}^\times ) \simeq \Z/2\Z$ in Lemma \ref{multiplicativeHnlem}, except when the residue field has characteristic $2$. 

On the other hand, allowing $E/F$ to be wildly ramified seems to be more difficult. The hypothesis that $E/F$ is at most tamely ramified is used in Lemma \ref{tame} and also in the proof of Lemmas \ref{lem:volB} and \ref{lem:indexB1} to compute the cardinalities of certain unipotent subgroups. Underlying these is the existence of a normal $\o$-basis of $\fO$, which according to Noether's integral normal basis theorem happens if and only if $E/F$ is at most tamely ramified.

Since these generalizations would increase the length of the paper and it is not clear that the Miyauchi subgroups take the same shape for ramified extensions, we leave aside the question of ramification for the remainder of this work.

\subsection{Acknowledgments}

The authors are grateful to Jean Michel and Brooks Roberts for enlightening discussions, and to the Institut Henri Poincar\'e for providing workspace during several of the authors' research visits to Paris. We also thank the referee for valuable comments. The first author has been partially supported by the Deutscher Akademischer Austauschdienst (DAAD). The second author was supported by Swiss National Science Foundation grant PZ00P2\_168164.

\section{Group cohomology lemmas}
The following lemmas are standard results in group cohomology that will be used often in what follows. Given a group $G$ and a $G$-module $M$, we denote by $H^n(G,M)$ the $n$th cohomology group of $G$. See Serre \cite[Ch.VII]{SerreLF} for more definitions and background.

\begin{lem}\label{tame}
Let $A$ be a Dedekind domain, $K$ its field of fractions, $L$  an at most tamely ramified finite Galois extension of $K$, $G$ its Galois group, and $B$ the integral closure of $A$ in $L$. We have $H^n(G, B)= 0$ for all $n {\geqslant} 1$. 
\end{lem}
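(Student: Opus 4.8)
The plan is to exhibit $B$, as a $G$-module, as a direct summand of an \emph{induced} module and then invoke Shapiro's lemma. The bridge to tameness is the classical fact that \emph{for a tamely ramified extension $L/K$ the trace $\mathrm{Tr}_{L/K}\colon B\to A$ is surjective}; equivalently, there exists $\beta\in B$ with $\mathrm{Tr}_{L/K}(\beta)=\sum_{\sigma\in G}\sigma(\beta)=1$. One checks this by localizing and completing at each maximal ideal $\p$ of $A$: over $\widehat{A_\p}$ the trace decomposes as $\sum_{\mathfrak P\mid\p}\mathrm{Tr}_{\widehat{L_{\mathfrak P}}/\widehat{K_\p}}$, and for a tamely ramified extension of complete discretely valued fields each of these is already surjective onto the valuation ring — this is exactly where tameness is used, since in the totally ramified part the trace of $1$ equals the ramification index $e$, which is a unit because $e$ is prime to the residue characteristic, while in the unramified part the residue field extension is separable so its trace is surjective. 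Hence $\mathrm{Tr}_{L/K}(B)$ is an ideal of $A$ that is locally everywhere the unit ideal, so it is all of $A$. (Alternatively one may simply quote E.~Noether's theorem that $B$ is a projective $A[G]$-module precisely when $L/K$ is tame; either input is enough.)

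Fix such a $\beta$ and consider the $G$-module $A[G]\otimes_A B_{0}=\mathrm{Ind}_{\{1\}}^{G}(B_{0})$, where $B_{0}$ denotes $B$ with only its $A$-module structure and $G$ acts through the first factor. The counit $p\colon A[G]\otimes_A B_{0}\to B$, $\sigma\otimes b\mapsto\sigma(b)$, is a surjection of $G$-modules, and I claim it is split by the $G$-equivariant map $s\colon B\to A[G]\otimes_A B_{0}$, $s(x)=\sum_{\sigma\in G}\sigma\otimes\beta\,\sigma^{-1}(x)$: a direct check (reindexing the sum by $\sigma\mapsto\tau\sigma$ for equivariance) gives $p\circ s(x)=\sum_{\sigma}\sigma(\beta)\,x=\mathrm{Tr}_{L/K}(\beta)\,x=x$. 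Thus $B$ is a $G$-module direct summand of $\mathrm{Ind}_{\{1\}}^{G}(B_{0})$. Since $G$ is finite, $\mathrm{Ind}_{\{1\}}^{G}(B_{0})=\mathrm{Coind}_{\{1\}}^{G}(B_{0})$, so Shapiro's lemma gives $H^{n}(G,\mathrm{Ind}_{\{1\}}^{G}(B_{0}))\cong H^{n}(\{1\},B_{0})=0$ for all $n\geqslant 1$; by additivity of $H^{n}(G,-)$ the group $H^{n}(G,B)$ is a direct summand of this and hence vanishes for all $n\geqslant 1$.

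The only genuinely non-formal step, and the only place the hypothesis is used, is the surjectivity of $\mathrm{Tr}_{L/K}$ on rings of integers; the rest is Shapiro's lemma together with the observation that a ``trace-one'' element of $B$ manufactures an explicit $G$-equivariant splitting. In the case actually needed later — $L/K$ an unramified quadratic extension of local fields — this argument is just the statement that a normal integral basis exists, so that $B\cong A[G]$ as $A[G]$-modules and the conclusion is immediate from Shapiro's lemma applied to the free module $A[G]=\mathrm{Ind}_{\{1\}}^{G}(A)$.
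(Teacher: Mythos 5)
The paper itself offers no proof, deferring entirely to \cite[(6.1.10) Theorem]{NWS}; your argument is a correct, self-contained account of what lies behind that citation. The chain of ideas is exactly right: tameness gives surjectivity of $\mathrm{Tr}_{L/K}\colon B\to A$ by a local check (unit ramification index in the totally ramified part, separable residue extension in the unramified part, then Nakayama plus the observation that $\mathrm{Tr}_{L/K}(B)$ is an $A$-ideal which is everywhere locally the unit ideal); a trace-one element $\beta$ furnishes a $G$-equivariant section $s(x)=\sum_{\sigma}\sigma\otimes\beta\,\sigma^{-1}(x)$ of the counit $A[G]\otimes_A B_0\twoheadrightarrow B$; and Shapiro's lemma, with $\mathrm{Ind}=\mathrm{Coind}$ for finite $G$, kills the cohomology of the induced module and hence of its direct summand $B$. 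This is in substance E.~Noether's theorem that $B$ is $A[G]$-projective when $L/K$ is tame, phrased directly via induced modules so that Shapiro applies at once, and it in fact yields the stronger conclusion (which is what NSW prove) that $B$ is cohomologically trivial for $G$: Tate cohomology $\hat H^n(H,B)$ vanishes for every subgroup $H\leqslant G$ and every $n\in\Z$, of which $H^n(G,B)=0$ for $n\geqslant 1$ is a special case. The only point worth flagging is that over an arbitrary Dedekind domain the residue fields need not be perfect, so the separability of the residue extension you invoke in the unramified step is genuinely part of the tameness hypothesis rather than automatic --- but you do state it, so the argument is complete.
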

\begin{proof}  See e.g.\ \cite[(6.1.10) Theorem]{NWS}.
\end{proof}
\begin{cor}\label{cor:gcoh}
Let $E$ be an unramified quadratic extension of a non-archimedean local field $F$ with Galois group $G$ and ring of integers $\fO$. We have that $H^n(G, \fO) = 0$ for all $n\geqslant 1$. 
\end{cor}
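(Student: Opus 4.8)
The plan is to obtain this as an immediate special case of the preceding lemma; the only work is checking that its hypotheses are met. Take $A = \o$, the ring of integers of $F$: it is a Dedekind domain (indeed a complete discrete valuation ring) with field of fractions $K = F$. Take $L = E$, a finite Galois extension of $F$ with Galois group $G$, and take $B = \fO$, which is by definition the integral closure of $\o$ in $E$. The extension $E/F$ is unramified, hence has ramification index $1$, which is coprime to the residue characteristic, so $E/F$ is (trivially) tamely ramified. The preceding lemma then yields $H^n(G, \fO) = 0$ for all $n \geqslant 1$, which is the claim.

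Because this deduction is so short, I would also record the self-contained route, which makes transparent why the unramifiedness is exactly what is needed: as $G$ is cyclic (here of order $2$), Tate cohomology is $2$-periodic, so it suffices to treat $\widehat{H}^0(G, \fO) = \o/\mathrm{Tr}_{E/F}(\fO)$ and $\widehat{H}^1(G, \fO)$. The former vanishes because $E/F$ is unramified, so the different is trivial and the trace $\fO \to \o$ is surjective; the latter vanishes by additive Hilbert 90, or more structurally because the integral normal basis theorem for unramified extensions of local fields exhibits $\fO$ as a free $\o[G]$-module of rank one, hence cohomologically trivial. Either way there is no genuine obstacle here: the only point requiring any care is lining up the hypotheses of the cited lemma (or, in the alternative argument, invoking the correct form of the normal basis theorem).
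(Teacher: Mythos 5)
Your proposal is correct and matches the paper's (implicit) reasoning: the corollary is stated without a separate proof because it follows at once from the preceding lemma, taking $A = \o$, $K = F$, $L = E$, $B = \fO$, and observing that an unramified extension is in particular tamely ramified. Your self-contained alternative via Tate periodicity and the integral normal basis theorem is also sound, but the paper relies only on the direct specialization.
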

Let $\varpi$ be a uniformizer of $\mathfrak{p}$. Since $E/F$ is unramified, $\varpi$ is also a uniformizer of $\fp$. Let $E_{\fp^n} = \fO/\fp^n$.
Corollary \ref{cor:gcoh} also descends to the finite additive rings $E_{\fp^n}$, as the next lemma shows. 
\begin{lem}\label{additiveHnlem}
We have that $H^{m}(G, E_{\fp^n}) = 0$ for all $m,n \geqslant 1$. 
\end{lem}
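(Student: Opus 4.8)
The plan is to reduce the vanishing for the finite truncations $E_{\fp^n} = \fO/\fp^n$ to the vanishing for $\fO$ itself, that is, to Corollary~\ref{cor:gcoh}, by exhibiting a short exact sequence of $G$-modules that links the two.

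The key observation is that, because $E/F$ is unramified, the uniformizer $\varpi$ of $p$ is also a uniformizer of $\fp$, so $\fO/\varpi^n\fO = \fO/\fp^n = E_{\fp^n}$; and since $\varpi$ lies in $F$ it is fixed by $G$, so multiplication by $\varpi^n$ is a morphism of $G$-modules $\fO \to \fO$. This map is injective (as $\fO$ is a domain) with cokernel $E_{\fp^n}$, whence the short exact sequence of $G$-modules
$$0 \longrightarrow \fO \stackrel{\varpi^n}{\longrightarrow} \fO \longrightarrow E_{\fp^n} \longrightarrow 0.$$
I would then pass to the associated long exact sequence in group cohomology, a piece of which reads
$$\cdots \longrightarrow H^k(G,\fO) \longrightarrow H^k(G, E_{\fp^n}) \longrightarrow H^{k+1}(G,\fO) \longrightarrow \cdots,$$
and invoke Corollary~\ref{cor:gcoh}, which gives $H^k(G,\fO) = H^{k+1}(G,\fO) = 0$ for every $k \geqslant 1$. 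Being squeezed between two trivial groups, $H^k(G,E_{\fp^n})$ then vanishes for all $k \geqslant 1$ and all $n \geqslant 1$; in particular $H^n(G, E_{\fp^n}) = 0$, which is the assertion.

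There is no real obstacle here beyond bookkeeping: the one point worth a sentence is the $G$-equivariance of the multiplication-by-$\varpi^n$ map, and this is exactly the place where the unramified hypothesis enters, since it guarantees that a uniformizer of $F$ is simultaneously a uniformizer of $\fp$. One could instead argue by induction on $n$, filtering $E_{\fp^n}$ by the submodules $\fp^i/\fp^n$ whose successive quotients are isomorphic to $\fO/\fp$ and reducing to additive Hilbert~90 for the residue field extension via the normal basis theorem; but that route requires strictly more input than the one-line exact-sequence argument above, so I would not take it.
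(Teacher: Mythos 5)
Your proof is correct and is essentially identical to the paper's: both exhibit the short exact sequence $0 \to \fO \xrightarrow{\varpi^n} \fO \to E_{\fp^n} \to 0$ of $G$-modules, pass to the long exact cohomology sequence, and squeeze $H^n(G,E_{\fp^n})$ between the vanishing groups $H^n(G,\fO)$ and $H^{n+1}(G,\fO)$ from Corollary~\ref{cor:gcoh}. Your added remark on why multiplication by $\varpi^n$ is $G$-equivariant (because $\varpi \in F$) is a worthwhile clarification that the paper leaves implicit.
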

\begin{proof}
We have the short exact sequence $$\xymatrix{1 \ar[r] & \fO \ar[r]^-{\times\varpi^n} & \fO \ar[r] & E_{\p^n} \ar[r] & 1},$$ and taking the long exact sequence in cohomology we have for all $m, n \geqslant 1$ that $$\xymatrix{ \cdots \ar[r] & H^m(G,\fO) \ar[r] & H^m(G, E_{\p^n}) \ar[r] & H^{m+1}(G,\fO) \ar[r] & \cdots}.$$ Since the outer two terms vanish by Corollary  \ref{cor:gcoh}, we also have $H^m(G, E_{\p^n})=0$ for all $m, n\geqslant 1$. 
\end{proof}
We also consider finite multiplicative groups. Let $E_{\fp^n}^\times= (\fO/\fp^n)^\times$. 
\begin{lem}\label{multiplicativeHnlem}
We have $H^1(G,E_{\fp^n}^\times)=0$ for all $n\geqslant 1$. 
\end{lem}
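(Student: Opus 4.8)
The plan is to reduce the statement to Hilbert's Theorem~90 over the residue field by filtering $E_{\fp^n}^\times$ by its higher principal units and killing the successive quotients with Lemma~\ref{additiveHnlem}. Recall first that, since $E/F$ is unramified, reduction modulo $\fp$ identifies $G$ with $\Gal(\F_{q_E}/\F_{q_F})$; and since the uniformizer $\varpi$ may be taken in $F$ it is fixed by $G$, so the Galois action of $G$ on $\fO$ descends to each ring $\fO/\fp^n$ and hence acts on $E_{\fp^n}^\times$. For $n=1$ we have $E_{\fp}^\times=\F_{q_E}^\times$ with $\F_{q_E}/\F_{q_F}$ a cyclic Galois extension of fields with group $G$, so $H^1(G,E_{\fp}^\times)=0$ is precisely Hilbert's Theorem~90.

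For general $n\geqslant 1$, the reduction map $E_{\fp^n}^\times\to E_{\fp}^\times$ is a surjective homomorphism of $G$-modules whose kernel is the group $U_n:=(1+\fp)/(1+\fp^n)$ of principal units, so the long exact cohomology sequence gives
$$H^1(G,U_n)\longrightarrow H^1(G,E_{\fp^n}^\times)\longrightarrow H^1(G,E_{\fp}^\times)=0,$$
and it suffices to prove $H^1(G,U_n)=0$. I would do this by induction on $n$, the case $n=1$ being trivial since $U_1$ is the trivial group. For $n\geqslant 2$, apply the long exact sequence to the short exact sequence of $G$-modules
$$1\longrightarrow (1+\fp^{n-1})/(1+\fp^n)\longrightarrow U_n\longrightarrow U_{n-1}\longrightarrow 1 .$$
The map $x\mapsto 1+\varpi^{n-1}x$ induces a $G$-equivariant isomorphism $E_{\fp}\xrightarrow{\sim}(1+\fp^{n-1})/(1+\fp^n)$: it is a group homomorphism because $\varpi^{2(n-1)}\in\fp^n$ once $n\geqslant 2$, and it is $G$-equivariant because $\varpi\in F$. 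Hence $H^1\bigl(G,(1+\fp^{n-1})/(1+\fp^n)\bigr)=H^1(G,E_{\fp})=0$ by Lemma~\ref{additiveHnlem}, while $H^1(G,U_{n-1})=0$ by the inductive hypothesis; the long exact sequence then forces $H^1(G,U_n)=0$, closing the induction.

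I do not expect a genuine obstacle; the points needing care are checking that $x\mapsto 1+\varpi^{n-1}x$ is really a $G$-equivariant \emph{group} isomorphism onto the graded piece — this is exactly where one uses that the uniformizer can be chosen in $F$ — and invoking Lemma~\ref{additiveHnlem} for each piece. As an alternative, since $G$ is cyclic of order $2$ with generator $\sigma$, one could compute the Tate cohomology $\widehat H^1(G,E_{\fp^n}^\times)$ directly as the group of norm-one units of $\fO/\fp^n$ modulo those of the form $\sigma(v)/v$, and check by a lifting argument that every norm-one unit has this shape; but the filtration argument above is cleaner and reuses machinery already in place.
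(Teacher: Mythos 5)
Your proof is correct, but it takes a genuinely different route from the paper's. The paper starts from the short exact sequence $1\to 1+\varpi^n\fO\to\fO^\times\to E_{\fp^n}^\times\to 1$ (going \emph{up} to the infinite-level module $\fO^\times$), which forces it to control $H^2(G,1+\varpi^n\fO)$ and $H^1(G,\fO^\times)$. The former is handled by shifting the exponent by $m$ and using that $1+\varpi^{n+m}\fO\simeq\fO$ as $G$-modules for $m$ large (a nontrivial input, essentially the $p$-adic exponential or the integral normal basis theorem); the latter is handled via the valuation exact sequence $1\to\fO^\times\to E^\times\to\Z\to 1$ and Hilbert~90 for $E^\times$. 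You instead go \emph{down}, using the reduction sequence $1\to U_n\to E_{\fp^n}^\times\to E_\fp^\times\to 1$, killing $H^1(G,E_\fp^\times)$ by Hilbert~90 over the residue field, and then killing $H^1(G,U_n)$ by the standard principal-unit filtration whose graded pieces $(1+\fp^{n-1})/(1+\fp^n)\simeq E_\fp$ are handled by Lemma~\ref{additiveHnlem} (with $n=1$). Your approach stays entirely at finite level, only ever touches $H^1$, and replaces the ``$1+\varpi^{n+m}\fO\simeq\fO$'' device by the elementary observation that $\varpi^{2(n-1)}\in\fp^n$ for $n\geqslant 2$; the paper's approach is slightly heavier but records the auxiliary facts $H^1(G,\fO^\times)=0$ and $H^2(G,1+\varpi^n\fO)=0$ along the way. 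Both are clean and both ultimately rest on the same two pillars (Hilbert~90 and the additive vanishing lemma).
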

\begin{proof}
The case $n =1$ is just Hilbert's Theorem 90: since $E/F$ is unramified, $G$ acts on the field $E_\fp$ by Galois automorphisms, and so $H^1(G, E_{\fp}^\times) =0$. 

Assume henceforth that $n \geqslant 2$. Hilbert's Theorem 90 no longer applies since $E_{\p^n}$ is not a field, but we shall still manage to show that $H^1(G,E_{\fp^n}^\times)=0$ as follows. By considering the sequence  $$\xymatrix{ 1 \ar[r] & 1+\varpi^n\fO \ar[r] & \fO^\times \ar[r] & E_{\p^n}^\times \ar[r] & 1},$$ and taking the long exact sequence in cohomology we get \begin{equation}\label{les1}\xymatrix{ \cdots \ar[r] & H^1(G,\fO^\times) \ar[r] & H^1(G, E_{\p^n}^\times) \ar[r] & H^2(G,1+\varpi^n\fO ) \ar[r] & \cdots}.\end{equation} 

We would like to show that $H^2(G,1+\varpi^n\fO )=0$. For any $m\geqslant 1$, consider the exact sequence $$\xymatrix{ 1 \ar[r] & 1+\varpi^{n+m}\fO \ar[r] & 1+\varpi^{n}\fO \ar[r] & \fp^n/\fp^{n+m} \ar[r] & 1}.$$ By the long exact sequence in cohomology, 
we see that \begin{equation} H^2(G, 1+\varpi^{n+m}\fO) \simeq H^2(G,1+\varpi^n\fO ),\end{equation} since the outer two cohomology groups in the long exact sequence vanish by Lemma \ref{additiveHnlem}.  For $m$ sufficiently large, we have $ 1+\varpi^{n+m}\fO \simeq \fO$ as $G$-modules, so that $H^2(G,1+\varpi^{n+m}\fO)$ and thus $H^2(G, 1+\varpi^{n}\fO)$ vanishes by Corollary \ref{cor:gcoh}. 

Returning to \eqref{les1}, to prove the lemma, it now suffices to show that  $H^1(G, \fO^\times)$ vanishes. Consider the sequence $$\xymatrix{ 1 \ar[r] & \fO^\times \ar[r] & E^\times \ar[r]^-{\rm val} & \Z \ar[r] & 1 }.$$ Taking the long exact sequence in cohomology and by Hilbert's Theorem 90 we have $$\xymatrix{1 \ar[r] & \o^\times \ar[r] & F^\times \ar[r]^-{\rm val} & \Z \ar[r] & H^1(G,\fO^\times) \ar[r] & 1}.$$ The valuation map here is surjective since $E/F$ is unramified. Therefore we have $H^1(G,\fO^\times)=0$, and thus $H^1(G, E_{\p^n}^\times)=0$ from \eqref{les1}.
\end{proof}

Let $E_{\fp^n}^0$ be the subgroup of $E_{\fp^n}$ of trace-zero elements, that is $E_{\fp^n}^0 = \{x \in E_{\fp^n}: x+\overline{x} = 0\}$. Likewise, write $E_{\fp^n}^1$ for the subgroup of $E_{\p^n}$ consisting of norm-one elements, i.e. $E_{\fp^n}^1=\{x \in E_{\fp^n}^\times : x \overline{x}=1\}$.  Our intended application of the above group cohomology lemmas is to calculate the cardinalities of the finite groups $E_{\fp^n}^0$ and $E_{\fp^n}^1$. For any $G$-module $M$, we write $Z^1(G,M)$ for the group of $1$-cocycles and $B^1(G,M)$ for the set of $1$-coboundaries.  
\begin{lem}
\label{lem:cardinality-E1}
For every $n \geqslant 1$, 
\begin{equation} |E_{\p^n}^1| = q_F^{n-1}(q_F+1).\end{equation}
\end{lem}
\begin{proof}
Let $\sigma$ denote the non-identity element of $G$.  We have \begin{equation}\label{isom3} Z^1(G, E_{\p^n}^\times) \simeq E_{\p^n}^1,\end{equation} where the map is given by $\xi \mapsto \xi(\sigma)$. 
We have an exact sequence $$\xymatrix{ 1 \ar[r] & \left( \frac{\o}{\p^n \cap \o}\right)^{\times} \ar[r] & E_{\p^n}^\times \ar[r]^-f & Z^1(G,E_{\p^n}^\times)},$$ where $f$ is given by the composition of $x \mapsto x/\overline{x}$ and \eqref{isom3}. The image of $f$ is by definition $B^1(G,E_{\p^n}^\times)$. But, by Lemma \ref{multiplicativeHnlem}, we have $B^1(G,E_{\p^n}^\times)=Z^1(G,E_{\p^n}^\times)$. Thus \begin{equation}|E_{\p^n}^1| = \frac{|E_{\p^n}^\times|}{|(\o/\p^n \cap \o)^\times|} = \frac{q_E^{n-1}(q_E-1)}{q_F^{n-1}(q_F-1)},\end{equation} which equals $q_F^{n-1}(q_F+1)$ since $q_E = q_F^2$, because $E/F$ is unramified.
\end{proof}
\begin{lem}
\label{lem:cardinality-E0}
For every $n \geqslant 1$, 
\begin{equation}|E_{\p^n}^0| = q_F^{n}.\end{equation}
\end{lem}
\begin{proof}
We have \begin{equation}\label{isom6} Z^1(G, E_{\p^n}) \simeq E_{\p^n}^0,\end{equation} where the map is given by $\xi \mapsto \xi(\sigma)$. Now consider the exact sequence $$\xymatrix{1 \ar[r] & \frac{\o}{\p^n \cap \o} \ar[r] & E_{\p^n} \ar[r]^-f & Z^1(G,E_{\mathfrak{p}^n})},$$ where the map $f$ is given by the composition of $u \mapsto u - \overline{u}$ and \eqref{isom6}. The image of $f$ is $f(E_{\p^n}) = B^1(G,E_{\p^n}) = Z^1(G, E_{\fp^n})$ by Lemma \ref{additiveHnlem}. Therefore \begin{equation}|E_{\p^n}^0| = \frac{|E_{\p^n}|}{|\frac{\o}{\p^n \cap \o}|} \cdot \big|H^1(G,E_{\p^n})\big|.\end{equation}
Since $E/F$ is unramified, we have $q_E=q_F^2$. We have $|E_{\p^n}| = q_E^n$ and $|\frac{\o}{\p^n \cap \o}| = q_F^n$ (again, by the unramified hypothesis). Therefore, we have shown that $$|E_{\p^n}^0| = q_E^{n/2} = q_F^{n}.$$
\end{proof}

\section{From $A_n$ to $\Gamma_n$}
Recall that we have chosen a common uniformizer $\varpi$ of $\mathfrak{p}$ and $\fp$. Introduce, for $n \geqslant 0$ and $u \in \fO$, 
\begin{equation} t_n(u) =  \left( \begin{array}{ccc} 1 & & u\varpi^{-n} \\ & 1 & \\  & & 1 \end{array}\right) \end{equation} 
\noindent which is in $\Gamma_n$ if and only if $u+\overline{u}=0$, and 
\begin{equation} \sigma_n  = \left( \begin{array}{ccc}  & & \varpi^{-n} \\ & 1 & \\  \varpi^n & &  \end{array}\right) \in \Gamma_n .\end{equation}

For every $n \geqslant 0$ and $k \geqslant 0$, let \begin{equation}C_n^{(k)} = \left\{ g \in \left( \begin{array}{ccc} \fO & \fO & \fp^{k-n} \\ \p^n & 1+ \p^n & \fO \\ \p^n & \p^n & \fO \end{array} \right) : \det (g) \in \fO^\times\right\} \cap U.\end{equation}

\begin{lem}
\label{lem-A:index}
For $E/F$ unramified we have
\begin{equation}
\left[C_n^{(k)} : C_n^{(k+1)}\right] = \begin{cases} q_F + 1 & \text{ if } \ k=0 \\ q_F & \text{ if } \ 1 \leqslant k <n.\end{cases}
\end{equation}
\end{lem}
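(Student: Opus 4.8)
The plan is to exhibit explicit left coset representatives for $C_n^{(k+1)}$ inside $C_n^{(k)}$, all of them unipotent or, when $k=0$, the element $\sigma_n$. First I would note that each $C_n^{(k)}$ with $0\leqslant k\leqslant n$ is a group (the displayed matrix set is closed under multiplication by a routine entrywise check, in which $k\leqslant n$ is used so that the $(1,3)$-block absorbs the relevant products, and it is stable under inversion once $\det\in\fO^\times$) and that $C_n^{(k+1)}\subseteq C_n^{(k)}$. Since $\varpi$ lies in $F$ we have $\overline{u\varpi^{-n}}=\overline{u}\,\varpi^{-n}$, so $t_n(u)\in C_n^{(k)}$ exactly when $u\in\fp^k$ and $u+\overline{u}=0$, and $t_n(u)t_n(v)=t_n(u+v)$. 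As $C_n^{(k)}$ and $C_n^{(k+1)}$ differ only in the allowed range of the $(1,3)$-entry, I will move that entry by left multiplication by these $t_n(u)$.

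The core step handles any $g\in C_n^{(k)}$ with $g_{33}\in\fO^\times$. The $(3,3)$-entry of the relation ${}^t\overline{g}\,Jg=J$ reads $\overline{g_{13}}g_{33}+\overline{g_{23}}g_{23}+\overline{g_{33}}g_{13}=0$, i.e.\ $x+\overline{x}=-g_{23}\overline{g_{23}}\,(g_{33}\overline{g_{33}})^{-1}\in\o$ where $x:=g_{13}g_{33}^{-1}\in\fp^{k-n}$. Writing $x=\varpi^{k-n}w$ with $w\in\fO$ and using $k<n$, the relation $\varpi^{k-n}(w+\overline{w})\in\o$ forces $w+\overline{w}\in\p^{n-k}\subseteq\p$, so the reduction of $w$ lies in $E_\fp^0$; since the reduction map $\{y\in\fO:y+\overline{y}=0\}\to E_\fp^0$ is surjective (here $2\in\o^\times$), there is $u\in\fp^k$ with $u+\overline{u}=0$ and $u\varpi^{-n}\equiv x\pmod{\fp^{k+1-n}}$. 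For such $u$ the matrix $t_n(-u)g$ differs from $g$ only in its first row; the new entries $g_{1j}-u\varpi^{-n}g_{3j}$ ($j=1,2$) stay in $\fO$ because $u\varpi^{-n}g_{3j}\in\fp^{k-n}\fp^{n}=\fp^{k}$, while the new $(1,3)$-entry $g_{33}(x-u\varpi^{-n})$ lies in $\fp^{k+1-n}$. Hence $t_n(-u)g\in C_n^{(k+1)}$ and $g\in t_n(u)C_n^{(k+1)}$. Arranging $u+\overline{u}=0$ while simultaneously clearing the $(1,3)$-entry to this precision is the heart of the argument, and is exactly where the unitary condition and the inequality $k<n$ enter.

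Two cosets $t_n(u)C_n^{(k+1)}$ and $t_n(u')C_n^{(k+1)}$ agree iff $t_n(u-u')\in C_n^{(k+1)}$ iff $u-u'\in\fp^{k+1}$, so the cosets of this shape are indexed by $\{u\in\fp^k:u+\overline{u}=0\}/\{u\in\fp^{k+1}:u+\overline{u}=0\}$, which by $u\mapsto\varpi^{-k}u\bmod\fp$ is in bijection with $E_\fp^0$, of cardinality $q_F$ by Lemma \ref{lem:cardinality-E0}. If $1\leqslant k<n$, every $g\in C_n^{(k)}$ automatically has $g_{33}\in\fO^\times$: the $(1,3)$-entry of ${}^t\overline{g}\,Jg=J$ is $\overline{g_{11}}g_{33}+\overline{g_{21}}g_{23}+\overline{g_{31}}g_{13}=1$, with $\overline{g_{21}}g_{23}\in\fp^n$ and $\overline{g_{31}}g_{13}\in\fp^{n}\fp^{k-n}=\fp^{k}\subseteq\fp$, forcing $\overline{g_{11}}g_{33}\equiv 1\pmod{\fp}$. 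So in this range the core step shows these $q_F$ cosets exhaust $C_n^{(k)}$, giving $[C_n^{(k)}:C_n^{(k+1)}]=q_F$.

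Finally the case $k=0$, where $C_n^{(0)}=\Gamma_n$ and $g_{33}$ need not be a unit. The elements of $\Gamma_n$ with unit $(3,3)$-entry still make up the $q_F$ cosets $t_n(u)C_n^{(1)}$ by the core step; I claim the elements $g\in\Gamma_n$ with $g_{33}\in\fp$ form exactly one more coset, namely $\sigma_nC_n^{(1)}$. Indeed $\sigma_n^{2}=I$ and $\det\sigma_n=-1$, and $\sigma_n g$ has successive rows $\varpi^{-n}(g_{31},g_{32},g_{33})$, $(g_{21},g_{22},g_{23})$, $\varpi^{n}(g_{11},g_{12},g_{13})$; using $g\in\Gamma_n$ one checks entry by entry that $\sigma_n g$ lies in the range defining $C_n^{(1)}$, the only place the hypothesis $g_{33}\in\fp$ being needed being the $(1,3)$-entry $\varpi^{-n}g_{33}\in\fp^{1-n}$. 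Thus $\sigma_n g\in C_n^{(1)}$ and $g\in\sigma_nC_n^{(1)}$. This coset is disjoint from the others because left multiplication by $t_n(u)$ fixes the third row, so every element of $t_n(u)C_n^{(1)}$ has unit $(3,3)$-entry (by the $(1,3)$-relation with $k=1$), whereas every element $\sigma_n h$ with $h\in C_n^{(1)}$ has $(3,3)$-entry $\varpi^{n}h_{13}\in\fp$. Hence $\Gamma_n$ is partitioned into $q_F+1$ cosets of $C_n^{(1)}$ and $[\Gamma_n:C_n^{(1)}]=q_F+1$. The only subtlety beyond the core step is this bookkeeping at $k=0$: because $g_{33}$ may be a non-unit, an extra ``big cell versus point at infinity'' coset appears, turning the affine-line count $q_F$ into the projective-line count $q_F+1$.
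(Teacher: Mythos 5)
Your proof is correct and follows essentially the same approach as the paper: partition $C_n^{(k)}$ into left cosets of $C_n^{(k+1)}$ represented by the upper-unipotent matrices $t_n(u)$ (equivalently the paper's $t_{n-k}$), plus the Weyl-type representative $\sigma_n$ for the single extra coset appearing at $k=0$. One point you spell out that the paper leaves implicit: to see that trace-zero unipotent translates genuinely exhaust every $g$ with $g_{33}\in\fO^\times$, you extract from the $(3,3)$-entry of ${}^t\overline{g}Jg=J$ that $g_{13}g_{33}^{-1}\varpi^{n-k}$ reduces modulo $\fp$ into $E_\fp^0$, which is precisely what guarantees a trace-zero $u$ can clear the $(1,3)$-entry.
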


\begin{proof}
Introduce
\begin{equation}\label{greekmatrix}
g = \left( \begin{array}{ccc} \alpha & \beta & \gamma \varpi^{k-n} \\ \delta \varpi^n & 1+ \epsilon \varpi^n & \zeta \\ \eta \varpi^n & \theta \varpi^n & \iota \end{array} \right),
\end{equation}
\noindent a typical element in $C_n^{(k)}$, where every Greek letter is a generic element in $\fO$ except the uniformizer $\varpi$.  By calculating the determinant of the matrix \eqref{greekmatrix}, we find that $\det(g) \equiv \alpha \iota \mod {\fp^k}$. Since $\det(g) \in \fO^\times$, if $k \geqslant 1 $, then {we have} \begin{equation}\label{aliointertable}\alpha ,\iota \in \fO^\times.\end{equation}

If $u \in \fO$ satisfies $u+\overline{u} = 0$, then $t_{n-k}(u) \in C^{(k)}_n$. For such $u$ and any $0\leqslant k <n$ we claim that
\begin{equation}\label{s3claim} t_{n-k}(u) C_{n}^{(k+1)} = \left\{g \in C_n^{(k)}: \iota \in \fO^\times, u-\iota^{-1} \gamma \in \fp\right\},\end{equation}
where $\iota$ and $\gamma$ on the right side of \eqref{s3claim} refer to entries of $g \in C_n^{(k)}$ as in \eqref{greekmatrix}. 

We first show $\subseteq$. Suppose $g \in C_{n}^{(k+1)}$ is written as in \eqref{greekmatrix}, but with $k+1$ in the upper right entry in place of $k$. Then $\iota \in \fO^\times$ by \eqref{aliointertable} for any $k\geqslant 0$, since $k+1\geqslant 1$.  Calculating, we have $$  t_{n-k}(u)g = \left( \begin{array}{ccc} \alpha + u \eta \varpi^k & \beta + u \theta \varpi^k & \gamma \varpi^{k+1-n} + u \iota \varpi^{k-n} \\ * & * & * \\ * & * & \iota \end{array} \right).$$ To show that $t_{n-k}(u)g$ is in the right hand side of \eqref{s3claim}, it then suffices to show that $\iota \in \fO^\times$ and $u- \iota^{-1}(\gamma \varpi+ u \iota) \in \fp$. The first of these is {\eqref{aliointertable}}, and the second is clear after expanding out. 

Now we show $\supseteq$. Suppose $g \in C^{(k)}_n$ is written as \eqref{greekmatrix}, $\iota \in \fO^\times$, and $u \in \fO$ is such that $u+\overline{u}=0$ and $u - \iota^{-1} \gamma \in \fp$. Then 
\begin{equation*}
t_{n-k}(-u) g = \left( \begin{array}{ccc} \alpha & \beta & (\gamma - \iota u) \varpi^{k-n} \\ *& *&* \\ * & * & * \end{array} \right),
\end{equation*}
but $\gamma - \iota u  \in \fp$ by hypothesis, so that $t_{n-k}(-u) g \in C_n^{(k+1)}$. The claim \eqref{s3claim} follows. 

Note that if $k\geqslant 1$, then the hypothesis $\iota \in \fO^\times$ can be omitted from the right hand side of \eqref{s3claim} because it follows automatically from \eqref{aliointertable}. Therefore under the hypothesis that $k\geqslant 1$ we see that \begin{equation}C_n^{(k)} = \bigsqcup_{u \in E_{\fp}^0} t_{n-k}(u) C_{n}^{(k+1)}.\end{equation} The result now follows from Lemma \ref{lem:cardinality-E0}.

It remains to treat the case $k=0$. Under this hypothesis we claim that
\begin{equation}\label{s3claim2}
\sigma_n C^{(1)}_n = \left\{ g \in C_n^{(0)}: \iota \in \fp\right\}.
\end{equation}
To see $\subseteq$, write $g \in C^{(1)}_n$ in the form \eqref{greekmatrix}, and calculate 
$$ \sigma_n g = \left( \begin{array}{ccc} \eta & \theta  & \iota \varpi^{-n}  \\ \delta \varpi^n & 1+\epsilon \varpi^n & \zeta \\ \alpha \varpi^n & \beta \varpi^n & \gamma \varpi \end{array} \right).$$
To see $\supseteq$, write $g \in C^{(0)}_n$ in the form \eqref{greekmatrix}, and recall the hypothesis $\iota \in \fp$. Note that $\sigma_n^2=1$, and calculate
 $$ \sigma_n g = \left( \begin{array}{ccc} \eta & \theta  & \iota \varpi^{-n}  \\ \delta \varpi^n & 1+\epsilon \varpi^n & \zeta \\ \alpha \varpi^n & \beta \varpi^n & \gamma  \end{array} \right),$$
 from which we see that $\iota \varpi^{-n} \in \fp^{1-n}$, as desired. 
 
 Combining \eqref{s3claim} and \eqref{s3claim2}, we see that 
 \begin{equation}C_n^{(0)} =\sigma_n C^{(1)}_n \sqcup \bigsqcup_{u \in E_{\fp}^0} t_{n}(u) C_{n}^{(1)},\end{equation} from which the result again follows by Lemma \ref{lem:cardinality-E0}.
\end{proof}

Since $\Gamma_n = C_n^{(0)}$ and $A_n = C_n^{(n)}$, the index of $A_n$ in $\Gamma_n$ follows by induction.

\begin{lem}
\label{lem-AA:index}
For every $n\geqslant 1$,
\begin{equation}
[\Gamma_n: A_n ] = q_F^{n-1}(q_F + 1).
\end{equation}
\end{lem}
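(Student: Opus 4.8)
The plan is to realize $[\Gamma_n : A_n]$ as a telescoping product of the indices computed in Lemma \ref{lem-A:index}. First I would observe that, directly from the definitions, $\Gamma_n = C_n^{(0)}$ (since $\fp^{0-n} = \fp^{-n}$) and $A_n = C_n^{(n)}$ (since $\fp^{n-n} = \fp^0 = \fO$). Moreover, because $\fp^{(k+1)-n} \subseteq \fp^{k-n}$, the $C_n^{(k)}$ form a decreasing chain of compact open subgroups
$$A_n = C_n^{(n)} \subseteq C_n^{(n-1)} \subseteq \cdots \subseteq C_n^{(1)} \subseteq C_n^{(0)} = \Gamma_n,$$
each of finite index in the previous one (finiteness holds since every $C_n^{(k)}$ is a compact open subgroup of $U$).

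Then, using multiplicativity of the index along this chain, I would write
$$[\Gamma_n : A_n] = \prod_{k=0}^{n-1} \left[ C_n^{(k)} : C_n^{(k+1)} \right].$$
By Lemma \ref{lem-A:index}, the term $k = 0$ contributes a factor $q_F + 1$, while each of the $n-1$ terms with $1 \leqslant k \leqslant n-1$ contributes a factor $q_F$. Multiplying these together gives $[\Gamma_n : A_n] = (q_F + 1) q_F^{n-1} = q_F^{n-1}(q_F+1)$, as claimed. For $n = 1$ the product degenerates to the single factor $q_F + 1 = q_F^{0}(q_F+1)$, consistent with the formula.

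The argument is essentially immediate once Lemma \ref{lem-A:index} is available, so there is no genuine obstacle here: the only points needing (minor) care are the bookkeeping identifications $\Gamma_n = C_n^{(0)}$ and $A_n = C_n^{(n)}$ and the verification that the chain is genuinely nested. The real work has already been carried out in establishing Lemma \ref{lem-A:index}.
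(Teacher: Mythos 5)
Your proof is correct and matches the paper's approach exactly: the paper also deduces the result from Lemma \ref{lem-A:index} by noting $\Gamma_n = C_n^{(0)}$, $A_n = C_n^{(n)}$, and iterating along the chain of the $C_n^{(k)}$. You have simply written out the telescoping product that the paper dispatches in one line as "follows by induction."
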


\section{From $A_n$ to $B_n$}
This section aims at reducing the study of the indices of $A_n$ in $\Gamma_n$ to those of the more paramodularly-shaped $B_n$. 
Consider the homomorphism $$f:B_n \to E_{\p^n}^\times $$ given by $$\left(\begin{array}{ccc} \alpha & \beta & \gamma \\ \delta \varpi^n & \epsilon & \zeta \\ \eta \varpi^n & \theta \varpi^n & \iota \end{array}\right) \longmapsto \epsilon \mod {\p^n}.$$ 
\begin{lem}
\label{lem:AB-decomposition}
If $n\geqslant 1$, the sequence \begin{equation}\xymatrix{ 1 \ar[r] & A_n \ar[r] & B_n \ar[r]^-f & E_{\p^n}^1 \ar[r] & 1}\end{equation} is short exact. In particular, \begin{equation}[\Gamma_0:A_n] = [\Gamma_0:B_n] \cdot |E_{\p^n}^1|.\end{equation} 
\end{lem}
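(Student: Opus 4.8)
The plan is to verify exactness of
$$\xymatrix{ 1 \ar[r] & A_n \ar[r] & B_n \ar[r]^-f & E_{\p^n}^1 \ar[r] & 1}$$
term by term, after first checking that $f$ is a well-defined group homomorphism with image landing in $E_{\p^n}^1$. First I would recall that an element $g \in B_n$ has the shape displayed in the statement, and I would observe that the unitarity relation ${}^t\overline{g} J g = J$, read off in the appropriate matrix entry (the $(2,2)$-entry of ${}^t\overline g J g$, which pairs the middle row of $g$ against the anti-diagonal $J$), forces the central entry $\epsilon$ to satisfy $\epsilon\overline\epsilon \equiv 1 \pmod{\p^n}$ — here using that all the off-diagonal contributions carry a factor $\varpi^n$ and hence vanish modulo $\p^n$. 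This shows $f(g) \in E_{\p^n}^1$. That $f$ is multiplicative is the standard fact that reduction-mod-$\p^n$ of the $(2,2)$-entry is multiplicative once the lower-left block is in $\p^n$: multiplying two matrices of $B_n$-shape, the new $(2,2)$-entry is $\epsilon_1\epsilon_2 + (\text{terms divisible by }\varpi^n)$, so it agrees with $\epsilon_1\epsilon_2$ mod $\p^n$. Thus $f$ is a homomorphism.

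Next I would identify the kernel. By definition $g \in \ker f$ iff $\epsilon \equiv 1 \pmod{\p^n}$, i.e. $\epsilon \in 1+\p^n$; comparing the defining matrix conditions for $B_n$ and for $A_n$, the only difference is precisely that $A_n$ demands the $(2,2)$-entry to lie in $1+\p^n$ whereas $B_n$ only demands it in $\fO$ (and the $(1,3)$-entry: $A_n$ demands $\fO$, $B_n$ also $\fO$ — these agree), so $\ker f = A_n$ exactly. This gives exactness at $A_n$ and at $B_n$. Left-exactness (injectivity of $A_n \hookrightarrow B_n$) is immediate as it is the genuine inclusion of subgroups of $U$.

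The one genuinely substantive point — the main obstacle — is surjectivity of $f$ onto $E_{\p^n}^1$: given $\epsilon_0 \in \fO$ with $\epsilon_0\overline{\epsilon_0} \equiv 1 \pmod{\p^n}$, I must produce an actual element of the unitary group $U$, with integral entries of the prescribed $B_n$-shape and determinant in $\fO^\times$, whose $(2,2)$-entry reduces to $\epsilon_0$. The natural candidate is the diagonal matrix $\mathrm{diag}(\mu, \epsilon, \overline{\mu}^{-1})$-type element; for this to lie in $U$ one needs the middle entry to be a genuine norm-one element of $E^\times$, not merely norm-one mod $\p^n$. So I would first lift: since $E/F$ is unramified, the norm map $\fO^\times \to \o^\times$ is surjective and, more to the point, by the surjectivity of $N\colon E_{\p^{n}}^{\times}\to$ (appropriate quotient) together with a Hensel/successive-approximation argument (or directly: $E_{\p^n}^1$ is the reduction mod $\p^n$ of $E^1 := \{x\in\fO^\times : x\overline x = 1\}$, which follows from $H^1(G,1+\varpi^n\fO)=0$ exactly as in the proof of Lemma \ref{multiplicativeHnlem}), I may choose $u \in \fO^\times$ with $u\overline u = 1$ and $u \equiv \epsilon_0 \pmod{\p^n}$. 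Then
$$
g_u = \left(\begin{array}{ccc} 1 & & \\ & u & \\ & & 1 \end{array}\right)
$$
satisfies ${}^t\overline{g_u} J g_u = J$ because the $(1,3)$ and $(3,1)$ slots pair $1$ with $1$ and the $(2,2)$ slot pairs $\overline u$ with $u$; it has integral entries, determinant $u \in \fO^\times$, it visibly has $B_n$-shape, and $f(g_u) = u \bmod \p^n = \epsilon_0$. Hence $f$ is surjective. Finally, the index identity follows formally: $[\Gamma_0 : A_n] = [\Gamma_0 : B_n]\,[B_n : A_n] = [\Gamma_0 : B_n]\,|E_{\p^n}^1|$ by the short exact sequence. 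I expect the lifting step to be the only place requiring care, and I would streamline it by invoking the already-established vanishing $H^1(G, 1+\varpi^n\fO) = 0$ rather than re-deriving a Hensel argument from scratch.
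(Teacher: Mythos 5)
Your proposal is correct and follows essentially the same route as the paper: identify $\ker f = A_n$, show $f(B_n) \subseteq E^1_{\p^n}$ by reading the $(2,2)$-entry of ${}^t\overline{g}Jg = J$, and prove surjectivity with the diagonal matrices $\mathrm{diag}(1,u,1)$ where $u\overline{u}=1$. You are in fact more careful than the published proof on the lifting step: the paper exhibits $\mathrm{diag}(1,u,1)$ for genuine norm-one units $u$ and then asserts without comment that these cover $E^1_{\p^n}$, silently assuming that every class in $E^1_{\p^n}$ lifts to a norm-one unit of $\fO^\times$, which is exactly the point you flag as the substantive obstacle. One small correction to your cohomological justification, though: the obstruction to this lifting lives in $\hat{H}^0(G, 1+\varpi^n\fO) \cong H^2(G, 1+\varpi^n\fO)$ (by $2$-periodicity of Tate cohomology for the cyclic group $G$ of order two), and it is this $H^2$-group that is shown to vanish in the proof of Lemma \ref{multiplicativeHnlem}, not $H^1(G, 1+\varpi^n\fO)$ as you cite; the two vanishings are equivalent via the Herbrand quotient, but the $H^2$ statement is the one the paper actually establishes and the one that plugs into the snake lemma. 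The most economical route using what the paper already supplies is to invoke Lemma \ref{multiplicativeHnlem} directly: since $H^1(G,E^\times_{\p^n})=0$ we have $Z^1=B^1$, so every $\epsilon_0 \in E^1_{\p^n}$ can be written as $x/\overline{x}$ with $x \in E^\times_{\p^n}$; lifting $x$ to any unit $\tilde{x} \in \fO^\times$ and setting $u = \tilde{x}/\overline{\tilde{x}}$ gives a genuine norm-one unit reducing to $\epsilon_0$.
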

\begin{proof}
Clearly, the kernel of $f$ is exactly $A_n$. It suffices to show that $f$ has image $E_{\p^n}^1$. Let $$b = \left(\begin{array}{ccc} \alpha & \beta & \gamma \\ \delta \varpi^n & \epsilon & \zeta \\ \eta \varpi^n & \theta \varpi^n & \iota \end{array}\right)$$ be an arbitrary element of $B_n$, where Greek letters are elements of $\fO$. Then since $${}^t \overline{b} J b = J,$$ we have \begin{equation}(\beta \overline{\theta} + \overline{\beta}\theta)\varpi^n + \epsilon\overline{\epsilon} = 1,\end{equation} by expanding the center entry of $ {}^t \overline{b} J b = J,$ and since $\varpi$ was chosen so that $\varpi \in \mathfrak{p} \subset F$.   Therefore we have $\epsilon \bar{\epsilon} \equiv 1 \mod{\p^n}$, so that $f(B_n) \subseteq E^1_{\p^n}$. On the other hand, we have $$\left(\begin{array}{ccc} 1 & & \\ & \epsilon & \\ & & 1\end{array}\right) \in B_n$$ for any $\epsilon \in \fO$ such that $\epsilon \overline \epsilon = 1$ by a direct calculation. Therefore $f(B_n) \supseteq E^1_{\p^n}$, and the lemma is proved.
\end{proof}

\section{Index of $B_n$}
To compute the index of $B_n$ in $\Gamma_0$ we will need the Iwahori decomposition, our reference for which is \cite{macdonald_spherical_1971}.  The index of $B_n$ in $\Gamma_0$ is given in the following lemma. 
\begin{lem}
\label{lem:volB}
 For every $n \geqslant 1$ we have
\begin{equation}
[{\Gamma_0:B_n}] = {q_F}^{3(n-1)} [{\Gamma_0:B_1}].
\end{equation}
\end{lem}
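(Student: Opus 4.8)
The plan is to reduce everything to the single recursion $[B_{n-1}:B_n] = |N(\fO/\fp)|$ for $n\geqslant 2$. Granting this, since $B_n\subseteq B_{n-1}\subseteq\cdots\subseteq B_1\subseteq\Gamma_0$, the lemma follows by telescoping:
\begin{equation*}
[\Gamma_0:B_n] \;=\; [\Gamma_0:B_1]\prod_{k=2}^{n}[B_{k-1}:B_k] \;=\; |N(\fO/\fp)|^{\,n-1}\,[\Gamma_0:B_1].
\end{equation*}

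To prove the recursion I would first put the Iwahori factorization in place. Let $N$, $\bar N$, and $T$ be the subgroups of $U$ of upper-triangular unipotent, lower-triangular unipotent, and diagonal elements; a short matrix computation identifies $N$ with the Heisenberg-type group $\{(x,z)\in E^2 : z+\overline{z}+x\overline{x}=0\}$ (and $\bar N$, $T$ analogously), and shows that the $N(\fO/\fp)$ of \eqref{NOpdef} is precisely the group of $\fO/\p$-points of $N$. One observes that $B_n$ is the preimage of the upper Borel of $U(\fO/\p^n)$ under reduction $\Gamma_0 = U(\fO)\to U(\fO/\p^n)$, so for $n\geqslant 1$ it is of Iwahori type and the product map
\begin{equation*}
(B_n\cap\bar N)\times(B_n\cap T)\times(B_n\cap N)\longrightarrow B_n
\end{equation*}
is a bijection \cite{macdonald_spherical_1971}. (Alternatively: $g\in B_n$ has a unique $\bar NTN$-type decomposition over $\fO$ since its leading principal minors are units — visible because $g\bmod\p^n$ is upper triangular in $U$ — and the three factors lie in $U$ because the homomorphism $g\mapsto J^{-1}\,({}^t\overline{g})^{-1}\,J$ fixes $g$ and permutes them.) The essential feature is that $B_n\cap T=T(\fO)$ and $B_n\cap N=N(\fO)$ are independent of $n$, while $B_n\cap\bar N$ varies.

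Next I would transfer the index to the unipotent part. For $n\geqslant 2$, any set of coset representatives for $\bar N\cap B_n$ in $\bar N\cap B_{n-1}$ is also a set of coset representatives for $B_n$ in $B_{n-1}$: they cover $B_{n-1}$ by the Iwahori factorization of $B_{n-1}$ together with the identity $B_n = (\bar N\cap B_n)\,T(\fO)\,N(\fO)$, and they are disjoint because $r^{-1}r'\in B_n\cap\bar N$ whenever $r,r'\in\bar N$ lie in one $B_n$-coset. Hence $[B_{n-1}:B_n]=[\bar N\cap B_{n-1}:\bar N\cap B_n]$. Now the point where the non-split nature of $U$ intervenes: $\bar N$ is not abelian, but is a central extension $1\to E^0\to\bar N\to E\to 1$, where $E^0=\{x\in E:x+\overline{x}=0\}$ is the centre and the quotient is read off the $(2,1)$-entry. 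Using that $\mathrm{Tr}_{E/F}$ maps $\p^m$ onto $p^m$ (here the unramified hypothesis and the oddness of $q_F$ are used, e.g.\ to solve $z+\overline{z}=-x\overline{x}$ by $z=-x\overline{x}/2$), one checks that $\bar N\cap B_m$ fits into a compatible exact sequence $1\to E^0\cap\p^m\to\bar N\cap B_m\to\p^m\to 1$, so that
\begin{equation*}
[\bar N\cap B_{n-1}:\bar N\cap B_n] \;=\; [\p^{n-1}:\p^n]\cdot[E^0\cap\p^{n-1}:E^0\cap\p^n] \;=\; q_E\,q_F.
\end{equation*}
Finally $|N(\fO/\fp)|=q_E\,q_F$ as well, since for each of the $q_E$ values of $\alpha\in\fO/\p$ the equation $\beta+\overline{\beta}=-\alpha\overline{\alpha}$ has $q_F$ solutions in $\fO/\p$. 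Thus $[B_{n-1}:B_n]=|N(\fO/\fp)|$, as claimed.

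The main obstacle is the computation of $[\bar N\cap B_{n-1}:\bar N\cap B_n]$: because $\bar N$ is non-abelian, the congruence subgroups $\bar N\cap B_m$ are not visibly self-similar as $m$ grows — this is the genuinely new feature relative to the split $\mathrm{GSp}_4$ setting of Roberts and Schmidt — and the quadratic constraint $z+\overline{z}+x\overline{x}=0$ must be controlled uniformly in $m$, which the central-extension filtration above accomplishes. A secondary technical point is verifying the Iwahori factorization for the subgroups $B_n$ with $n\geqslant 2$, which are not congruence subgroups of the Iwahori $B_1$; the remaining bookkeeping is routine.
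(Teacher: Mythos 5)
Your proof is correct and follows the same overall strategy as the paper's: obtain an Iwahori factorization of $B_n$ by descending the one for $\GL_3(E)$ to the fixed points of $g \mapsto J\,{}^t\overline{g}^{-1}J$, observe that the $T$- and $N$-factors are independent of $n$ so the index concentrates in the lower unipotent $\bar N$, and deduce the recursion $[B_{n-1}:B_n]=q_F^3=|N(\fO/\fp)|$. Where you diverge is in how that last count is carried out. The paper writes an explicit coset decomposition of $\bar N \cap B_{n-1}$ into $\bar N \cap B_n$-cosets with representatives indexed by pairs $(\alpha,\beta)$ modulo $\fp$, and then computes $|N(\fO/\fp)|$ separately (Lemma~\ref{lem:cardinality-E}); you instead exploit the central extension $1\to E^0\to\bar N\to E\to 1$ (the $(2,1)$-entry map), show it restricts to compatible short exact sequences $1\to E^0\cap\fp^m\to\bar N\cap B_m\to\fp^m\to 1$, and multiply the two indices $q_E\cdot q_F$. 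The two routes give the same number, but yours is a touch cleaner on one point: for $n\geqslant 2$ the literal representatives written in the paper, with entries $\alpha\varpi^{n-1},\beta\varpi^{n-1}$ and constraint $\beta+\overline{\beta}+\alpha\overline{\alpha}=0$, do not themselves lie in $U$ — the membership condition is $\beta+\overline{\beta}+\alpha\overline{\alpha}\varpi^{n-1}=0$, which modulo $\fp$ becomes $\beta+\overline{\beta}\equiv 0$ when $n\geqslant 2$ — though the cardinality of the index set, hence the count, is unchanged. Your filtration argument handles the $n$-dependence of the quadratic constraint uniformly and avoids having to worry about choosing valid lifts. Both approaches ultimately rest on the same two inputs: the Iwahori decomposition and the structure of the Heisenberg-type group $\bar N$.
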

\proof We use the Iwahori decomposition for the group $U$, which may be quickly derived from that of $\GL_3 (E)$. In \cite[Proposition 2.6.4]{macdonald_spherical_1971}, taking $G= \GL_3(E)$ and $S$ to be the convex hull of the following 4 points $$\{(0,0,0), (n,0,0),(0,0,-n),(n,n,0)\},$$ we obtain the factorization 
\begin{equation}
\label{GL3-iwahori}
\left( \begin{array}{ccc} \fO &\fO&\fO \\ \p^n & \fO & \fO \\ \p^n & \p^n & \fO \end{array}\right) = 
\left( \begin{array}{ccc} 1 & &  \\ \p^n & 1 & \\ \p^n & \p^n & 1 \end{array}\right)
\left( \begin{array}{ccc} \fO^\times & &  \\ & \fO^\times & \\  & & \fO^\times \end{array}\right)
\left( \begin{array}{ccc} 1 & \fO & \fO\\ & 1 & \fO \\  & & 1 \end{array}\right),
\end{equation}
where each matrix represents the indicated subgroup of $\GL_3(E)$. We may now realize the group $B_n$ as the fixed points of the left hand side of \eqref{GL3-iwahori} by the involution $g \mapsto J {}^t \overline{g}^{-1} J$. Let $g = n_{-} t n_+$ be the factorization of a typical element $g$ as in \eqref{GL3-iwahori}. Since $J {}^t \overline{g}^{-1} J = J {}^t \overline{n}_{-}^{-1} J J {}^t \overline{t}^{-1} J J {}^t \overline{n}_{+}^{-1} J,$ we derive for any $n\geqslant 1$ that
\begin{equation}
\label{B-iwahori}
B_n = 
\left( \begin{array}{ccc} 1 & &  \\ \p^n & 1 & \\ \p^n & \p^n & 1 \end{array}\right)
\left( \begin{array}{ccc} \fO^\times & &  \\ & \fO^\times & \\  & & \fO^\times \end{array}\right)
\left( \begin{array}{ccc} 1 & \fO & \fO\\ & 1 & \fO \\  & & 1 \end{array}\right),
\end{equation}

\noindent where each matrix appearing in \eqref{B-iwahori} represents the intersection of the indicated subgroup with $U$. Moreover, the matrices of the first subgroup appearing in \eqref{B-iwahori} are exactly those of the form 
\begin{equation}
\label{55}
\left(
\begin{array}{ccc}
1 & & \\
\alpha \varpi^n & 1 & \\
\beta \varpi^n & - \bar{\alpha} \varpi^n & 1
\end{array}
\right)
\quad \text{where} \quad
\beta + \bar\beta + \alpha\bar\alpha \varpi^n = 0.
\end{equation}

\noindent For $n\geqslant 1$ this leads to the decomposition 
\begin{equation}
\left( \begin{array}{ccc} 1 & &  \\ \p^{n-1} & 1 & \\ \p^{n-1} & \p^{n-1} & 1 \end{array}\right) = \bigsqcup_{\substack{\alpha, \beta \in E_\p \\ \beta + \bar\beta + \alpha\bar\alpha \varpi^n = 0 }} 
\left( \begin{array}{ccc} 1 & &  \\ \alpha \varpi^{n-1} & 1 & \\ \beta \varpi^{n-1} & - \bar\alpha \varpi^{n-1} & 1 \end{array}\right)\left( \begin{array}{ccc} 1 & &  \\ \p^{n} & 1 & \\ \p^{n} & \p^{n} & 1 \end{array}\right),
\end{equation}
where the condition $\beta + \bar\beta + \alpha\bar\alpha \varpi^n = 0$ on $\alpha, \beta \in E_\p$ is understood to mean that we sum over those cosets for which there exists a representative in $\fO$ satisfying the indicated equation.

Given $\alpha, \beta$ as above, it is clear by reducing the condition modulo $\p$ that $\beta \in E_\p^0$. Conversely, given $\alpha^* \in E_\p$ and $\beta^* \in E_\p^0$, there exists a representative $\beta \in \fO$ for $\beta^*$ such that $\beta + \bar\beta=0$. Let $\alpha$ be any choice of representative for $\alpha^*$. Then $\alpha\bar\alpha \varpi^n \in \mathfrak{p}$. Since $E/F$ is at most tamely ramified we have as a consequence of the integral normal basis theorem (see e.g.\ \cite[\S5 Theorem 2]{Froehlich}) that $\mathrm{Tr}\, \p = \mathfrak{p}$, and so there exists $z \in \p$ such that $\alpha \bar\alpha\varpi^n = z + \bar z$. Then $\beta - z$ is another representative for $\beta^* \in E_\p^0$, and $(\beta-z) + \overline{(\beta-z)} + \alpha\bar\alpha \varpi^n = 0$. Thus, the decomposition \eqref{B-iwahori} may be re-written as 
\begin{equation}
\label{B-recursive-relation}
B_{n} = 
\bigsqcup_{{\substack{\alpha \in E_\p \\ \beta \in E_\p^0}}} 
\left( \begin{array}{ccc} 1 & &  \\ \alpha {\varpi^{n}} & 1 & \\ \beta {\varpi^{n}} & - \bar\alpha {\varpi^{n}} & 1 \end{array}\right)
B_{n+1}.
\end{equation}

\noindent We have $|E_\p|=q_E =q_F^2$ and $|E_\p^0|=q_F$ by Lemma \ref{lem:cardinality-E0}, so this finishes the proof by induction. \qed

\section{Index of $B_1$}

We use reduction modulo $\p$ and the Bruhat decomposition, and recall now some useful lemmas to do so.

\begin{lem}
\label{lem:conservation}
Let $X, G$ be two groups, $H$ a subgroup of $G$ and $\phi : X \to G$ a surjective morphism. Then
\begin{equation}
[G:H] = [\phi^{-1}(G) : \phi^{-1}(H)].
\end{equation}
\end{lem}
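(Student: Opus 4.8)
The plan is to show that $\phi$ induces a bijection between the coset spaces, from which the equality of (cardinal) indices is immediate. First I would note that, since the codomain of $\phi$ is $G$, the set $\phi^{-1}(G)$ is simply all of $X$, so the assertion to be proved is the equality $[G:H] = [X : \phi^{-1}(H)]$.

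Next I would define $\Phi : X/\phi^{-1}(H) \to G/H$ by $x\,\phi^{-1}(H) \mapsto \phi(x)H$ and check that it is a well-defined bijection. Well-definedness is immediate: if $x' = xk$ with $k \in \phi^{-1}(H)$, then $\phi(x') = \phi(x)\phi(k) \in \phi(x)H$. Surjectivity follows from the surjectivity of $\phi$: every coset $gH$ can be written $\phi(x)H = \Phi(x\,\phi^{-1}(H))$ for any $x$ with $\phi(x) = g$. Injectivity is the chain of equivalences $\phi(x)H = \phi(x')H \iff \phi(x^{-1}x') \in H \iff x^{-1}x' \in \phi^{-1}(H) \iff x\,\phi^{-1}(H) = x'\,\phi^{-1}(H)$. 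Since $\Phi$ is a bijection of sets, $|X/\phi^{-1}(H)| = |G/H|$, whether these are finite or not.

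A slightly slicker write-up, which I might prefer in the paper, avoids cosets entirely: one has $\ker\phi \subseteq \phi^{-1}(H)$, the induced isomorphism $X/\ker\phi \cong G$ carries the subgroup $\phi^{-1}(H)/\ker\phi$ onto $H$, and an isomorphism of groups preserves indices of corresponding subgroups, so $[X:\phi^{-1}(H)] = [\,X/\ker\phi : \phi^{-1}(H)/\ker\phi\,] = [G:H]$.

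I do not expect any genuine obstacle here: this is a standard fact of elementary group theory, recorded so that it can be combined with the reduction-modulo-$\fp$ maps used in the next section (where one takes $\phi$ to be reduction of a compact group onto its finite quotient and $H$ a subgroup defined by congruence conditions). The only small point worth attending to is to phrase the argument via the coset bijection so that it also covers the a priori possibility of an infinite index, which the formulation above does automatically.
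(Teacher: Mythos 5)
Your primary argument is correct and is essentially the paper's proof, just presented from the opposite direction: the paper pulls back the coset decomposition $G = \bigsqcup_{g} gH$ along $\phi$ and observes that each $\phi^{-1}(gH)$ is a coset $g'\phi^{-1}(H)$, while you define the induced map $\Phi\colon X/\phi^{-1}(H) \to G/H$ and verify it is a bijection; these are the same bijection read in two directions. Your alternative via $X/\ker\phi \cong G$ and the correspondence theorem is a legitimately different (and arguably tidier) route, though it relies on the isomorphism theorems rather than the bare coset count -- either would serve the paper's purposes, and both correctly handle the case of infinite index.
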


\proof We write $G/H$ for the set of left cosets of $H$ in $G$, that is we have 
$$ G = \bigsqcup_{g \in G/H} gH \quad \text{ and so } \quad \phi^{-1}(G) = \bigsqcup_{g \in G/H} \phi^{-1}(gH).$$
Since $\phi$ is surjective, for any representative of $g \in G/H$ we may choose a preimage $g' \in \phi^{-1}(g)$. Then $\phi^{-1}(gH) = g'\phi^{-1}(H)$ and the lemma follows.
\qed

\begin{lem}[Hensel's Lemma]
\label{lem:surjectivity}
Let $\mathbf{G}$ be a smooth scheme over the ring of integers $\o$ of a non-archimedean local field. Let $\mathfrak{p}$ the corresponding maximal ideal. Then the reduction modulo $\mathfrak{p}$
\begin{equation}
\mathbf{G}(\o) \longrightarrow \mathbf{G}( F_\mathfrak{p}) 
\end{equation}

\noindent is surjective.
\end{lem}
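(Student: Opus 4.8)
The plan is to obtain the lift by Newton's method, using the infinitesimal lifting property of smooth morphisms together with the completeness of $\o$. Write $\mathbf{F}_p = \o/p$ for the residue field and fix a point $\bar x \in \mathbf{G}(\mathbf{F}_p)$; the task is to produce $x \in \mathbf{G}(\o)$ reducing to $\bar x$ (if $\mathbf{G}(\mathbf{F}_p)$ is empty there is nothing to prove).

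The substantive step, and the one I would treat first, is the claim that for every $n \geqslant 1$ the reduction map $\mathbf{G}(\o/p^{n+1}) \to \mathbf{G}(\o/p^n)$ is surjective. This I would deduce from formal smoothness of $\mathbf{G}$ over $\o$: the scheme $\mathrm{Spec}(\o/p^n)$ is the closed subscheme of $\mathrm{Spec}(\o/p^{n+1})$ defined by the ideal $p^n/p^{n+1}$, which is square-zero since $2n \geqslant n+1$, and a formally smooth scheme admits a lift of points along any such nilpotent thickening. If $\mathbf{G}$ is not affine, one first observes that a point of $\mathbf{G}(\o/p^n)$ factors through a single affine open of $\mathbf{G}$ because $\o/p^n$ is local, and then carries out the lift on that open; made fully explicit, after presenting the affine open as a closed subscheme of some $\mathbf{A}^N_\o$ cut out by functions whose Jacobian matrix has the expected rank at $\bar x$, this lift is exactly the output of the multivariable Hensel lemma. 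Iterating starting from $x_1 = \bar x$ yields a compatible system $(x_n)_{n \geqslant 1}$ with $x_n \in \mathbf{G}(\o/p^n)$ and $x_{n+1} \mapsto x_n$ under reduction.

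Finally I would pass to the limit. Since $\o$ is complete and separated for the $p$-adic topology, $\o = \varprojlim_n \o/p^n$, and for a scheme $\mathbf{G}$ locally of finite presentation over $\o$ the functor of points converts this into $\mathbf{G}(\o) = \varprojlim_n \mathbf{G}(\o/p^n)$; hence the system $(x_n)$ is induced by a point $x \in \mathbf{G}(\o)$, which reduces to $\bar x$. The only delicate point is this last identification, together with the reduction to the affine case that underlies it --- which is transparent when $\mathbf{G}$ is affine, since then $\mathbf{G}(-)$ is corepresented by an $\o$-algebra and so commutes with inverse limits of rings. In all our applications $\mathbf{G}$ is an affine group scheme of finite type over $\o$, so this causes no trouble; alternatively one may simply cite the standard form of Hensel's lemma (for instance Grothendieck's EGA IV, Proposition 17.5.3).
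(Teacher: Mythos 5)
The paper's proof consists of a single citation to EGA~IV, Th\'eor\`eme~18.5.17, which is precisely the statement that a smooth scheme over a Henselian local ring has surjective reduction map on points. Your proposal fills in the standard argument behind that citation: lift step by step along the square-zero thickenings $\o/p^n \hookrightarrow \o/p^{n+1}$ using formal smoothness, then pass to the limit using completeness of $\o$ and the identification $\mathbf{G}(\o) = \varprojlim_n \mathbf{G}(\o/p^n)$. This is correct, and it is the same underlying argument; you have just unwound the reference. Two small remarks. First, for affine $\mathbf{G} = \operatorname{Spec}(A)$ the identification $\operatorname{Hom}_{\o\text{-alg}}(A, \varprojlim R_n) = \varprojlim \operatorname{Hom}_{\o\text{-alg}}(A, R_n)$ requires no finite presentation hypothesis at all, since $\operatorname{Hom}$ commutes with arbitrary inverse limits in the second variable; finite presentation is only relevant if one wants the statement for non-affine schemes in full generality (and even there the reduction you give, that every $\o/p^n$-point factors through a fixed affine open because $\operatorname{Spec}(\o/p^n)$ is a one-point space whose image is independent of $n$, handles it). Second, the EGA reference you offer, IV~17.5.3, is the infinitesimal criterion characterizing formal smoothness; the paper's citation to IV~18.5.17 is the more directly applicable Henselian-ring form of the lemma, and is what you are actually reproving.
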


\proof This is a standard version of Hensel's lemma in algebraic geometry. For precise details, see \cite[Th\'eor\`eme 18.5.17]{EGA4pt4}.
\qed

\begin{lem}
\label{lem:indexB1}
We  have
\begin{equation}
[\Gamma_0 : B_1] = q_F^3+1.
\end{equation}
\end{lem}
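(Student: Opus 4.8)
The strategy is to pass to the reduction modulo $\p$. Consider the reduction map $\phi: \GL_3(\fO) \to \GL_3(E_\p)$ restricted suitably; more precisely, I want to exhibit both $\Gamma_0$ and $B_1$ as the full preimages under reduction mod $\p$ of subgroups of $U(E_\p) = U(\mathbf{F}_q)$, the unitary group over the residue field, so that Lemma \ref{lem:conservation} applies and reduces the computation to a finite-group index. By Hensel's lemma (Lemma \ref{lem:surjectivity}), applied to the smooth $\o$-scheme $\mathbf{U}$, the reduction map $\mathbf{U}(\o) \to \mathbf{U}(\mathbf{F}_p)$ is surjective; here $\mathbf{U}(\o) = \Gamma_0$ (one checks that the integral points of $\mathbf{U}$ are exactly the matrices in $\GL_3(\fO)$ preserving $J$ with unit determinant, which is $\Gamma_0$). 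The group $\Gamma_0$ is thus $\phi^{-1}(\overline{U})$ where $\overline{U} = \mathbf{U}(\mathbf{F}_q)$ is the finite unitary group $\mathrm{U}_3(\mathbf{F}_q)$ with its natural $\mathbf{F}_{q^2}$-structure, $q = q_F$.

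Next I identify $\overline{B_1} := \phi(B_1)$. Unwinding the definition of $B_1$ (the matrix shape has $\p$ in the three lower-left positions), reduction mod $\p$ kills those entries, so $\overline{B_1}$ is the subgroup of $\overline{U}$ consisting of upper-triangular elements — that is, the image in $\overline{U}$ of the Borel subgroup of $\GL_3$. Concretely $\overline{B_1} = \overline{U} \cap B$ where $B \subset \GL_3(\mathbf{F}_{q^2})$ is the upper-triangular Borel; this is a Borel subgroup of the (quasi-split, since $E/F$ unramified) finite unitary group $\overline{U}$. I must also check that $B_1$ is the \emph{full} preimage $\phi^{-1}(\overline{B_1})$: if $g \in \Gamma_0$ reduces into $\overline{B_1}$ then its lower-left $2+1$ entries lie in $\p$, which is exactly the condition defining $B_1$ inside $\Gamma_0$; this is immediate. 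Hence by Lemma \ref{lem:conservation}, $[\Gamma_0 : B_1] = [\overline{U} : \overline{B_1}]$.

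Finally, $[\overline{U} : \overline{B_1}]$ is the number of Borel subgroups of the finite quasi-split unitary group $\mathrm{U}_3(\mathbf{F}_q)$, equivalently the number of points of the flag variety $\overline{U}/\overline{B_1}$ over $\mathbf{F}_q$. By the Bruhat decomposition $\overline{U} = \bigsqcup_{w \in W} \overline{B_1} w \overline{B_1}$, the index is $\sum_{w \in W} q^{\ell(w)}$ evaluated with the appropriate parameters for the relative root system of $\mathrm{U}_3$. The relative Weyl group here has order $2$ (rank one, with a single simple reflection), and the relevant root subgroup over $\mathbf{F}_q$ has order $q^3$ — this is precisely $|N(\fO/\fp)| = q_F^3$ from Lemma \ref{lem:cardinality-E}, the unipotent radical of the opposite Borel modulo $\p$. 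So the two Bruhat cells have sizes $|\overline{B_1}|$ and $|\overline{B_1}| \cdot q^3$, giving $[\overline{U}:\overline{B_1}] = 1 + q^3 = q_F^3 + 1$.

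The main obstacle is the bookkeeping in the last step: correctly matching up the Bruhat cell decomposition of $\mathrm{U}_3(\mathbf{F}_q)$ with the concrete matrix description, in particular verifying that the nontrivial cell contributes exactly the factor $q^3$ — i.e.\ that the relevant unipotent coset representatives are parametrized by $N(\fO/\fp)$ as computed in Lemma \ref{lem:cardinality-E}. Equivalently one may avoid invoking the general Bruhat formalism and instead directly decompose $\overline{U}$ into the big cell (of size $|\overline{B_1}| \cdot q^3$) and the cell containing the Weyl element $\sigma = \phi(\sigma_1)$ (of size $|\overline{B_1}|$), which is the route I would take to keep everything elementary and self-contained, since the group is small.
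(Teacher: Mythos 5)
Your proposal is correct and follows essentially the same route as the paper: reduce to the finite unitary group over the residue field via Hensel's lemma and Lemma~\ref{lem:conservation}, then apply the Bruhat decomposition (rank-one relative Weyl group with two cells) and identify the nontrivial unipotent cell with $N(\fO/\fp)$ of size $q_F^3$ by Lemma~\ref{lem:cardinality-E}. The paper spells out the twisted-Frobenius bookkeeping ($W^F$, $U_w^F$) following Digne--Michel, whereas you invoke the quasi-split Bruhat decomposition at a higher level, but the underlying argument is the same.
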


\proof 
Consider the unitary group $\mathbf{U}$ defined over $\o$ and its standard Borel subgroup $\mathbf{B}$. We have $\mathbf{U}(\o) = \Gamma_0$ and $\mathbf{B}(\o)= B_1$. Within this proof, we write $U= \mathbf{U}(F_{\mathfrak{p}})$ and $B= \mathbf{B}( F_{\mathfrak{p}})$. By Hensel's Lemma (Lemma \ref{lem:surjectivity}) we have surjective morphisms
$$\Gamma_0 \twoheadrightarrow U \quad \text{ and } \quad B_1 \twoheadrightarrow B.$$ By Lemma \ref{lem:conservation} we then have that $[\Gamma_0: B_1] = [U : B]$, so that the problem reduces to finite fields. 

We apply the Bruhat decomposition for $U$, following \cite[Chapter 4]{digne_representations_2020}. Consider the group $\GL_3$ over $ \overline{F}_{\mathfrak{p}}$ and the involution $\tau$ defined by $g \mapsto J  {}^t \overline{g}^{-1} J$, where $\overline{g}$ denotes the Frobenius automorphism applied to $g$, i.e.\ the entries of $g$ raised to the $p$th power. The fixed points of $\GL_3$ by this involution are exactly the group $U$ (cf.\ \cite[Example 4.3.3]{digne_representations_2020}).  Let $T$ be the standard maximal torus of $\GL_3$, $N=N(T)$ its normalizer, and $W=W(T)$ the associated Weyl group. Then, we have a Bruhat decomposition of $U$ of the form 
\begin{equation}
U = \bigsqcup_{w \in W^\tau} B w U_w^{\tau},
\end{equation}
with $B w U_w^{\tau}$ being a direct product \cite[Lemma 3.2.7]{digne_representations_2020}, where $$U_w= \prod_{\{\alpha \in \Phi^+: w(\alpha)<0\}} U_\alpha$$ and $U_\alpha$ is the root space associated to the positive root $\alpha \in \Phi^+$ (see \cite[Proposition 4.4.1(ii)]{digne_representations_2020}). Warning: $U_w$ and $U_\alpha$, which are subgroups of $\GL_3$, should not be confused with the unitary group $U$.  

We proceed to calculate $W^\tau$ and $U_w^\tau$. By \cite[Proposition 4.4.1(i)]{digne_representations_2020} we have that $W^\tau = N(T)^\tau/T^\tau$. We can calculate explicitly that $$T^\tau = \left\{\left( \begin{array}{ccc} a & & \\ & b & \\ & & c\end{array}\right) : \ a\overline{c} = 1, b\overline{b}=1\right\}$$  and  $$N(T)^\tau = T^\tau \sqcup \left\{\left( \begin{array}{ccc} & & c \\ & b & \\ a & & \end{array}\right) : \ a\overline{c} = 1, b\overline{b}=1\right\}.$$
Thus $W^\tau$ consists of two elements, which may be represented by $I$ and $J$ (see \eqref{Jdef}). 

Now we calculate $U_w^\tau$ for $w=I$ and $w=J$. We first determine the sets $\{\alpha \in \Phi^+: w(\alpha)<0\}$. Let $U_{ij}$, $1 \leqslant i\neq j \leqslant 3$ denote the root space of $\GL_3$ that is non-zero in the $ij$th entry. Let $\alpha_{ij}$ denote the corresponding root (see \cite[Theorem 2.3.1(i)]{digne_representations_2020}). 
The set $\Phi^+$ of positive roots corresponding to our choice of the standard upper triangular Borel is $\Phi^+ = \{\alpha_{12},\alpha_{13},\alpha_{23}\}$. The Weyl group $W$ acts on $T$ by conjugation, and thus on $\Phi$. Clearly, $I$ fixes $\Phi$, and one may compute that $J(\alpha_{12})= \alpha_{32}$, $J(\alpha_{13})= \alpha_{31}$, and $J(\alpha_{23}) = \alpha_{21}$. Thus, $\{\alpha \in \Phi^+: I(\alpha)<0\} = \varnothing$ and $\{\alpha \in \Phi^+: J(\alpha)<0\} = \Phi^+$. Therefore we have 
\begin{equation}
U = B \sqcup B J \left( \begin{array}{ccc} 1 & \star & \star \\ & 1 & \star \\ & & 1\end{array}\right)^\tau.
\end{equation}
This last group of unipotent matrices is in bijection with the set $\cup_{\alpha \in \mathfrak{O}/\mathfrak{P}} N_\alpha,$ where $N_\alpha = \{\beta \in \mathfrak{O}/\mathfrak{P} : \beta + \bar\beta + \alpha \bar\alpha = 0\}$ (cf.\ \eqref{55}). Recall (e.g.\ \cite[\S5 Theorem 2]{Froehlich}) that $\mathrm{Tr}\, \fO = \o$, since $E/F$ is at most tamely ramified. Thus, there exists $\theta \in \mathfrak{O}/\mathfrak{P}$ such that $\theta + \overline{\theta} = -\alpha \bar\alpha$. We then find that $N_\alpha = \theta + E_\p^0$, which is of cardinality $q_F$ by Lemma \ref{lem:cardinality-E0}. Thus, the cardinality of the group of unipotent matrices in question is then $q_Eq_F = q_F^3$.
We conclude that $[U:B] = q_F^3+1$, hence the Lemma follows. 
 \qed

We now collect the previous results. By Lemmas \ref{lem-AA:index} and \ref{lem:AB-decomposition} we have \begin{equation}\vol (\Gamma_n )= \frac{[\Gamma_n : A_n]}{[\Gamma_0 : A_n]} \vol (\Gamma_0)= \frac{q_F^{n-1} (q_F+1) }{[ \Gamma_0:B_n] | E_{\p^n}^1|},\end{equation} which by Lemmas \ref{lem:cardinality-E1} and \ref{lem:volB}is $= q_F^{3-3n}[\Gamma_0:B_1]^{-1}.$ Finally, by Lemma \ref{lem:indexB1} we conclude Theorem \ref{theorem}. 


\end{document}